\documentclass[12pt]{article}
\usepackage{amsmath}
\usepackage{amssymb}
\usepackage{amscd}
\usepackage{amsthm}

\theoremstyle{plain}
\newtheorem{Thm}{Theorem}

\newtheorem{Prop}[Thm]{Proposition}
\newtheorem{Cor}[Thm]{Corollary}
\newtheorem{Lem}[Thm]{Lemma}

\theoremstyle{definition}

\theoremstyle{Remark}

\numberwithin{equation}{section}

\title{Hodge theory on generalized normal crossing varieties}

\author{{\em Dedicated to Slava Shokurov for his sixtieth birthday} \\
Yujiro Kawamata}


\begin{document}

\maketitle

\begin{abstract}
We generalize some results in Hodge theory 
to generalized normal crossing varieties.
\end{abstract}

\section{Introduction}

A normal crossing variety is defined to be a variety which is 
locally isomorphic to a normal crossing divisor in a smooth 
variety.
Similarly we define a {\em generalized} normal crossing variety as a 
variety which is locally isomorphic to a direct product of 
normal crossing divisors in smooth varieties.

A suitably defined generalized normal crossing {\em divisor} 
in a generalized normal crossing variety is again a 
generalized normal crossing variety.
In this way a certain induction argument on the dimension works for 
generalized normal crossing
varieties and divisors.
For example a reducible divisor often appears and is transformed
to a normal crossing divisor by resolution of singularities.
Moreover one sometimes has to consider a reducible divisor on a reducible
variety during the induction argument on the dimension as in \cite{invent}.

A generalized normal crossing variety appears naturally in higher dimensional 
stable reduction.
Indeed a fiber of a semistable family is a normal crossing 
variety, and a fiber of a generalized semistable family of Abramovich and Karu 
\cite{AK} is a generalized normal crossing variety.

If each irreducible component of the intersections of some of the 
irreducible components of a generalized normal crossing variety is smooth, 
then it is called a
generalized {\em simple} normal crossing (GSNC) variety.

The purpose of this paper is to prove that some Hodge theoretic statements 
for smooth varieties or pairs can be generalized to 
generalized simple normal crossing varieties or pairs.
It is similar to the extensions  
in \cite{reducible} to simple normal crossing varieties.
Therefore the proofs are similar in many places but we need additional 
care in the details.
For example we have a cell complex 
instead of a simplicial complex in Proposition~\ref{sign}. 
We shall concentrate on the differences from \cite{reducible} in this paper.

The results of this paper are as follows.
We prove first that there exists a naturally defined cohomological mixted
Hodge $\mathbf{Q}$-complex on a projective GSNC pair, a pair consisting
of a GSNC variety and a GSNC divisor (Theorem~\ref{absolute case}).
By the standard machinery due to Deligne (\cite{Deligne2}), we obtain
degenerations of spectral sequences as corollaries.
The point is that we have explicit descriptions of the graded pieces
(Lemma~\ref{Gr1}) so that concrete calculations are possible.

We generalize the above absolute case to the relative case where 
every closed stratum is assumed to be smooth over the base 
(Theorem~\ref{relative case}).
Then by using the theory of canonical extensions, 
we prove that higher direct images of the structure sheaves are
locally free (Theorem~\ref{canonical extension}). 

As an application, we prove a generalization of Koll\'ar's vanishing theorem
(Theorem~\ref{injective}), 
thereby modify the contents of \cite{invent}~\S 4, and correct
an error in the proof of \cite{invent}~Theorem~4.3.
Fujino \cite{Fujino} found a simpler alternative proof of its main theorem, 
but our construction which shows that the original proof still works
might also be useful sometime.

In \S 1, we give definitions concerning the GSNC varieties and GSNC pairs.
We prove some lemmas on desingularizations and coverings.
In \S 2, we construct a cohomological mixed Hodge $\mathbf{Q}$-complex on
GSNC pairs.
The results in \S 2 are generalized to relative settings in \S 3.
We extend the results to the case where there are degenerate fibers in \S 4
by using the theory of canonical extensions.
We prove a vanishing theorem of Koll\'ar type for GSNC
varieties in \S 5 following the original proof of \cite{Kollar1}.
It is generalized to the $\mathbf{Q}$-divisor version
by using the covering lemma, thereby finishing the modification of  
the argument in \cite{invent}.

We work over the base field $\mathbf{C}$.


\section{GSNC pairs}

A reduced complex analytic space $X$ is said to be a 
{\em generalized simple normal crossing variety (GSNC variety)} 
if the following conditions are satisfied:

\begin{enumerate}

\item (local) 
At each point $x \in X$, there exists a complex analytic neighborhood
which is isomorphic to a direct product to normal crossing varieties, 
i.e. varieties isomorphic to normal crossing divisors on smooth varieties.

\item (global) 
Any irreducible component of the intersection of some of the irreducible 
components of $X$ is smooth.

\end{enumerate}

The first condition can be put as follows: 
there exists a complex analytic neighborhood $X_x$ at each point $x$ which is 
embedded into a smooth variety $V$ with a coordinate system such that 
$X_x$ is a complete intersection of divisors defined by monomials of 
coordinates.
The {\em level} of $X$ at $x$ is the smallest number of such equations.
For example, a fiber of a semistable family (\cite{AK}) 
satisfies the first condition.
$X$ is locally complete intersection, hence Gorenstein and locally 
equidimensional.

A {\em closed stratum} of $X$ is an irreducible component 
of the intersection of some of the irreducible components of $X$.
A closed stratum is smooth by assumption.
In the case where $X$ is connected, let $X^{[n]}$ denote, for an integer $n$, 
the disjoint union of all the closed strata of codimension $n$ in $X$.

The combinatorics of the closed strata is described by the {\em dual graph}.
It is a cell complex where a closed stratum of 
codimension $n$ corresponds to an $n$-cell and the inclusion of closed strata
corresponds to the boundary relation in the opposite direction.
Any cell is a direct product of simplices due to the local structure of 
$X$.

\begin{Prop}\label{sign}
There is a Mayer-Vietoris exact sequence
\[
0 \to \mathbf{Q}_X \to \mathbf{Q}_{X^{[0]}} \to \mathbf{Q}_{X^{[1]}} 
\to \dots \to \mathbf{Q}_{X^{[N]}} \to 0 
\]
where $N = \dim X$ and the arrows are alternating sums of restriction
homomorphisms.
\end{Prop}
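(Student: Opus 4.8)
The plan is to check exactness stalkwise and to reduce everything to the contractibility of a product of simplices. Here each $\mathbf{Q}_{X^{[n]}}$ is regarded as a sheaf on $X$, namely the pushforward of the constant sheaf on $X^{[n]}$ along the finite map $X^{[n]} \to X$, so that exactness of the sequence may be tested on stalks. Fix $x \in X$. Shrinking $X$ to a neighborhood of $x$ and invoking the local structure of a GSNC variety, we may assume $X \cong D_1 \times \cdots \times D_r$ with each $D_j$ a normal crossing variety, say with local branches $D_j^{(1)}, \dots, D_j^{(k_j)}$ at the relevant point (each $D_j^{(i)}$ smooth, and $\bigcap_{i \in S_j} D_j^{(i)}$ irreducible for every nonempty $S_j \subseteq \{1, \dots, k_j\}$). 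Then every closed stratum of $X$ through $x$ is a product $\bigcap_{i \in S_1} D_1^{(i)} \times \cdots \times \bigcap_{i \in S_r} D_r^{(i)}$ for a unique tuple $(S_1, \dots, S_r)$ of nonempty subsets, and it has codimension $\sum_j (|S_j| - 1)$ in $X$. Consequently the stalk of $\mathbf{Q}_{X^{[n]}}$ at $x$ is the $\mathbf{Q}$-vector space freely spanned by the tuples $(S_1, \dots, S_r)$ with $\sum_j (|S_j| - 1) = n$.

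Next I identify the stalk of the whole sequence with a familiar cochain complex. Fixing once and for all a total order on the set of irreducible components of $X$ totally orders the local branches of each $D_j$ and thereby determines the signs in all of the alternating sums, making the maps in the statement well-defined global morphisms of sheaves with square zero. With these conventions, the stalk at $x$ of
\[
0 \to \mathbf{Q}_X \to \mathbf{Q}_{X^{[0]}} \to \mathbf{Q}_{X^{[1]}} \to \cdots \to \mathbf{Q}_{X^{[N]}} \to 0
\]
is precisely the augmented cellular cochain complex with $\mathbf{Q}$-coefficients of the cell complex
\[
K = \Delta^{k_1 - 1} \times \cdots \times \Delta^{k_r - 1},
\]
a product of simplices in which an $n$-cell is a product of faces of total dimension $n$ --- that is, a tuple $(S_1, \dots, S_r)$ as above --- and the cellular coboundary is the alternating sum of the face-deletion maps, which corresponds to the alternating sum of restriction homomorphisms on the geometric side. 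This is exactly where the observation that ``any cell is a direct product of simplices'' enters: it makes the identification literal rather than only up to subdivision.

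It then suffices to recall that $K$, being a product of contractible spaces, is contractible; equivalently, by the Künneth formula over the field $\mathbf{Q}$ applied to $C^{\bullet}(K) \cong C^{\bullet}(\Delta^{k_1 - 1}) \otimes \cdots \otimes C^{\bullet}(\Delta^{k_r - 1})$, one has $H^p(K; \mathbf{Q}) = \mathbf{Q}$ for $p = 0$ and $0$ for $p > 0$. Since $K$ is nonempty and connected, the augmentation $\mathbf{Q} \to C^0(K)$ is injective and computes $H^0(K)$, so the augmented complex $0 \to \mathbf{Q} \to C^0(K) \to \cdots \to C^{\dim K}(K) \to 0$ is exact. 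As $\dim K = \sum_j (k_j - 1) \le \dim X = N$, this augmented complex, padded by zeros in degrees above $\dim K$, is the stalk at $x$ of the displayed sequence; hence that sequence is exact at $x$, and $x$ being arbitrary, it is exact. One could instead induct on $r$, starting from the simple normal crossing case of \cite{reducible} and using a tensor-product argument, but the direct stalk computation seems the most economical.

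The one genuinely delicate point is the sign bookkeeping: the orientations must be chosen so that the alternating-sum restriction maps agree on the nose with the cellular coboundary of the product cell complex $K$ --- and in particular compose to zero. Once a global order on the irreducible components is fixed this is a routine verification, and all the remaining content is the classical fact that a product of simplices has trivial reduced cohomology.
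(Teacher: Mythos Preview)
Your proof is correct and follows essentially the same approach as the paper's: both check exactness locally by identifying the stalk complex with the (augmented) cellular cochain complex of the dual graph, observe that each cell is a product of simplices and hence contractible, and handle the signs by fixing orientations on the cells. Your version simply spells out in detail what the paper compresses into four sentences.
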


\begin{proof}
We assign arbitrarily fixed orientation to each cell of the dual graph.
Then the corresponding chain complex has boundary maps 
as alternating sums.
Namely the sign of a boundary map for each pair of closed strata is 
positive or negative according to whether the
orientations are compatible or not.
The sign convention of the sequence of the constant sheaves is 
defined accordingly.
Since each cell is contractible, the chain complex is locally exact, 
hence the sequence of sheaves is exact.
\end{proof}

A Cartier divisor $D$ on $X$ is said to be {\em permissible}
if it does not contain any stratum.
We denote by $D \vert_Y$ the induced Cartier divisor on a closed stratum $Y$.
A {\em generalized simple normal crossing divisor (GSNC divisor)} 
$B$ is a permissible 
Cartier divisor such that $B \vert_Y$ is a reduced simple normal 
crossing divisor for each closed stratum $Y$.
In this case $B$ is again a GSNC variety of codimension $1$ in $X$.
Namely a GSNC divisor on a GSNC variety of level $r$ 
is a GSNC variety of level $r+1$.
Such an inductive structure may be an advantage.
If $D$ is a permissible $\mathbf{Q}$-Cartier divisor whose support is a 
GSNC divisor, then the round up 
$\ulcorner D \urcorner$ is well defined.
We have $\ulcorner D \urcorner \vert_Y = \ulcorner D \vert_Y \urcorner$ 
for any closed stratum $Y$.

A {\em generalized simple normal crossing pair (GSNC pair)} $(X,B)$
consists of a GSNC variety $X$ and a GSNC divisor $B$ on it.
For each closed stratum $Y$ of $X$, we denote $B_Y = B \vert_Y$.
We denote by $B_{X^{[n]}}$ the union of all the simple normal crossing 
divisors $B_Y$ for irreducible components $Y$ of $X^{[n]}$.
A {\em closed stratum} of the pair $(X,B)$ is an irreducible component 
of the intersection of some of the irreducible components of $X$ and $B$.

A morphism $f: X' \to X$ between two GSNC varieties
is said to be a {\em permissible birational morphism}
if it induces a bijection between the sets of closed strata of $X$ and $X'$, 
and birational morphisms between closed strata.
A smooth subvariety $C$ of $X$ is said to be a {\em permissible center} for
a GSNC pair $(X,B)$ if the 
following conditions are satisfied:

\begin{enumerate}

\item $C$ does not contain any closed stratum, 
and the scheme theoretic intersections $C_Y$ of $C$ and the closed strata $Y$ 
are smooth.

\item At each point $x \in C_Y$, there exists a coordinate system 
in a neighborhood of $x$ such that 
$B_Y$ and $C_Y$ are defined by ideals generated by monomials.

\end{enumerate}

A blowing up $f: X' \to X$ whose center is permissible with respect to 
a GSNC pair $(X,B)$ is called a {\em permissible blowing up}.
It is a permissible birational morphism from another GSNC variety, 
and the union of the strict transform 
of $B$ and the exceptional divisor is again a GSNC divisor.

The following is a corollary of a theorem of Hironaka \cite{Hironaka}.

\begin{Lem}\label{resolution}
Let $(X,B)$ be a GSNC pair.
Consider one of the following situations:

(a) Let $f: X \dashrightarrow S$ be a rational map to another variety 
whose domain of definition has non-empty intersection with
each closed stratum of the pair $(X,B)$.

(b) Let $Z$ be a closed subset which does not 
contain any closed stratum of the pair $(X,B)$.

Then there exists a tower of permissible blowing-ups
\[
g: (X_n,B_n) \to (X_{n-1},B_{n-1}) \to \dots \to (X_0,B_0)=(X,B)
\]
where $B_{i+1}$ is the union of the strict transform of $B_i$ and 
the exceptional divisor, satisfying the following, respectively to the 
situations (a) and (b):

(a) There is a morphism $h: X_n \to S$ such that $h = f \circ g$.

(b) The union $\bar B_n = B_n \cup g^{-1}(Z)$ is a 
GSNC divisor on $X_n$.
\end{Lem}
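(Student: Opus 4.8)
The plan is to reduce the statement to Hironaka's resolution theorem applied locally and then glue, using the inductive structure on the level of a GSNC variety. The key point is that a GSNC variety is, by definition, locally a product of normal crossing divisors, and—crucially—a GSNC divisor inside a GSNC variety of level $r$ is a GSNC variety of level $r+1$, so statements of this kind admit an induction on the level (or equivalently on the dimension of the dual graph). First I would treat the case where $X$ itself is smooth: then case (a) is precisely the statement that one can resolve the indeterminacy of a rational map by blowing up smooth centers contained in the complement of the domain of definition while keeping $B$ simple normal crossing, and case (b) is the statement that one can modify $X$ so that $B \cup g^{-1}(Z)$ becomes a simple normal crossing divisor; both are standard consequences of \cite{Hironaka}, and the centers produced can be taken to meet the coordinate hyperplanes of $B$ in the monomial fashion required of permissible centers, because Hironaka's algorithm is functorial and respects the ambient normal crossing structure.

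Next I would explain how the smooth case propagates to the GSNC case. Given a permissible center $C$ on a closed stratum $Y$, condition (2) in the definition of permissible center guarantees that in suitable local coordinates $C$ and the branches of $B$ are all monomial, hence $C$ extends compatibly to a permissible center on every larger stratum containing $Y$, and ultimately to a permissible center on $X$ itself. Thus I would run Hironaka's resolution on the disjoint union $X^{[0]}$ of the top-dimensional (maximal) closed strata — these are smooth — producing a sequence of blowing-ups with smooth centers there; by the product local structure and the monomiality of all the ideals involved, each such center is automatically permissible for $(X,B)$, so it induces a permissible blowing-up of the whole $X$ and hence a permissible blowing-up of every closed stratum simultaneously. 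In situation (a) I pull back the ideal sheaf of the graph closure of $f$ (or, what is the same, the indeterminacy ideal) and resolve it; in situation (b) I pull back the ideal of $Z$ together with the ideal of $B$ and resolve their product. Because a permissible birational morphism induces a bijection on closed strata and birational morphisms between them, the process is compatible across all strata at once.

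A cleaner way to organize the same argument, which I would actually adopt, is to work one local model at a time. Locally $X$ is a product $X^1 \times \dots \times X^k$ of normal crossing varieties $X^j \subset V^j$, and each $X^j$ is itself cut out in the smooth $V = \prod V^j$ by monomials; the closed strata of $X$ are products of coordinate strata of the $V^j$. I would apply the resolution results of \cite{reducible} (the simple normal crossing case) — or directly Hironaka — to the ambient smooth $V$ relative to the normal crossing divisor $\sum X^j_{\mathrm{red}} + B$, choosing centers that are monomial with respect to this divisor; restricting such a resolution of $V$ to $X$ yields exactly a tower of permissible blowing-ups of $(X,B)$ with the desired properties. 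Functoriality of the resolution procedure with respect to the smooth morphisms $V \to V^j$ and with respect to open immersions then lets these local resolutions be glued into a global one, giving $h$ in case (a) and the GSNC divisor $\bar B_n$ in case (b).

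The main obstacle I anticipate is bookkeeping rather than any deep new idea: one must check that the centers produced by Hironaka's algorithm, when it is fed the ambient smooth variety together with the full monomial divisor, really do satisfy both parts of the definition of a permissible center — in particular that the scheme-theoretic intersections $C_Y$ with all closed strata stay smooth and monomial at every stage, and that after blowing up, the new exceptional divisor together with the strict transform of $B$ remains a GSNC divisor (equivalently, restricts to a reduced SNC divisor on every stratum). This is where the functoriality and the combinatorial compatibility of the dual graph under permissible blowing-ups do the real work, and where the argument most closely parallels, and relies on, the corresponding verification in \cite{reducible}.
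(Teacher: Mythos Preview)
Your second paragraph is essentially the paper's proof: run Hironaka on each irreducible component $Y$ of $X$ (these are smooth by the GSNC hypothesis), and observe that the resulting centers, viewed as subvarieties of $X$, are permissible for $(X,B)$ because the indeterminacy locus (resp.\ $Z$) contains no closed stratum and this property persists under blowing up; repeating over all irreducible components finishes both (a) and (b). The paper stops there.

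The surrounding material in your proposal---induction on the level, passing to a local product model $X^1\times\dots\times X^k$ inside an ambient smooth $V$, and gluing local resolutions via functoriality of Hironaka's algorithm---is a genuinely different and heavier route. It would work, but it buys nothing here: since the irreducible components of a GSNC variety are already globally smooth, one can apply Hironaka \emph{globally} on each of them, and no localization or gluing is needed. Your worry in the last paragraph about verifying that the centers meet every smaller stratum smoothly and monomially is legitimate bookkeeping, but the paper handles it implicitly by the clause ``the indeterminacy locus of $f$ does not contain any closed stratum, \dots\ moreover this property is preserved in the process,'' rather than by invoking functoriality of the resolution procedure. In short: your plan is correct, but you can drop the local-model and level-induction scaffolding and keep only the component-by-component argument.
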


\begin{proof}
(a) We reduce inductively the indeterminacy locus of the rational map $f$.
For an irreducible component $Y$ of $X$,
the restriction of $f$ to $Y$ is resolved by a tower of permissible 
blowing-ups of the pair $(Y,B_Y)$ by the theorem of Hironaka.
Since the indeterminacy locus of $f$ does not contain 
any closed stratum, the centers of the blowing-ups do not contain 
any closed stratum either.
Moreover this property is preserved in the process.
Therefore the same centers determine a tower of permissible 
blowing-ups of $(X,B)$.
If we do the same process for all the irreducible components of $X$, then 
we obtain the assertion.

(b) is similar to (a).
We resolve $Z$ at each irreducible component $Y$ of $X$,
since $Z$ does not contain any closed stratum of the pair $(Y,B_Y)$.
\end{proof}

We generalize a covering lemma \cite{AB}:

\begin{Lem}\label{covering}
Let $(X,B)$ be a quasi-projective GSNC pair.
Let $D_j$ ($j=1,\dots,r$) be permissible reduced Cartier divisors 
which satisfy the following conditions:

\begin{enumerate}
\item $D_j \subset B$. 

\item $D_j$ and $D_{j'}$ have no common irreducible components if $j \ne j'$.

\item $D_j \cap Y$ are smooth for all $D_j$ and closed strata $Y$ of $X$.
\end{enumerate}

Let $d_j$ be rational numbers, and $D = \sum d_jD_j$.
Then there exists a finite surjective morphism from another GSNC pair 
$\pi: (X',B') \to (X,B)$ such that $B' = \pi^{-1}(B)$ and 
$\pi^*D$ has integral coefficients.
\end{Lem}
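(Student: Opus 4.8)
The plan is to reduce to a single divisor with an integer root, and then build the cover by the Bloch--Gieseker trick adapted to the ``product of normal crossings'' local structure. Let $m$ be a common denominator of the $d_j$. It suffices to produce, for each $j$ separately, a finite surjective morphism $\pi_j\colon(X_j,B_j)\to(X_{j-1},B_{j-1})$ of GSNC pairs with $B_j=\pi_j^{-1}(B_{j-1})$ and $\pi_j^*D_j$ divisible by $m$, and then set $\pi=\pi_1\circ\cdots\circ\pi_r$ (with $X_0=X$, and the $D_j$ and $B_{j-1}$ read as their reduced total transforms). For this induction to close up one checks that conditions (1)--(3) persist for the not-yet-treated divisors after each step: permissibility is clear because a finite surjective morphism obtained from the covers below carries closed strata onto closed strata; and smoothness of $D_{j'}\cap Y'$ for a closed stratum $Y'$ of $X_j$ follows because $Y'\to Y$ is, by construction, a composition of cyclic covers branched along reduced simple normal crossing divisors contained in $B_Y$, under which the preimage of the smooth divisor $D_{j'}\cap Y\subset B_Y$ stays smooth (using condition (2) and a general choice of the auxiliary branch divisors introduced below to keep everything transverse).

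So fix $D=D_j$ and the integer $m$. First I reduce to the case where $\mathcal O_X(D)$ is an $m$-th power. Since $(X,B)$ is quasi-projective, write $\mathcal O_X(D)\cong\mathcal A\otimes\mathcal B^{\vee}$ with $\mathcal A,\mathcal B$ very ample. Choose general members $a\in|\mathcal A^{\otimes m}|$ and $b\in|\mathcal B^{\otimes m}|$; since $X$ has only finitely many closed strata and each is quasi-projective, Bertini lets me take $a$ and $b$ permissible with $a|_Y$ and $b|_Y$ reduced simple normal crossing, and transverse to all the $D_{j'}|_Y$, on every closed stratum $Y$. Form the degree-$m$ root covers of $X$ branched along $a$ and along $b$ and let $W\to X$ be their fiber product; on $W$ both $\mathcal A$ and $\mathcal B$, hence $\mathcal O_X(D)$, become $m$-th powers. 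By the Bloch--Gieseker smoothness theorem applied to the smooth ambient factors together with the generality of $a,b$, and the fact that analytic-locally $X$ is a product of normal crossing divisors in smooth varieties and a root cover of such a product along a divisor which is simple normal crossing with respect to the factors is again a product of (smooth)$\times$(normal crossing), the space $W$ is a GSNC variety and the preimage of $B$ is a GSNC divisor on it. Replacing $(X,B)$ by $W$ we may thus assume $\mathcal O_X(D)\cong\mathcal M^{\otimes m}$.

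Now take the cyclic cover $\pi\colon X'=\operatorname{Spec}_X\bigl(\bigoplus_{i=0}^{m-1}\mathcal M^{-i}\bigr)\to X$ determined by the canonical section $s_D\in H^0(X,\mathcal O_X(D))$. Then $\pi^*D=m\,(\pi^{-1}D)_{\mathrm{red}}$ is divisible by $m$, as required. It remains to see that $X'$ is a GSNC variety and $B'=\pi^{-1}(B)$ a GSNC divisor: over a point of $X$ choose an analytic neighborhood $U\cong\prod_k U_k$ with $U_k$ a normal crossing divisor in a smooth $V_k$; since $D\subset B$ and $D\cap Y$ is smooth for every closed stratum $Y$ (conditions (1) and (3)), $D|_U$ is cut out by a monomial, so the cyclic cover of $U$ branched along it is analytically a product of cyclic covers of the $V_k$ branched along smooth divisors, restricted back to the $U_k$ --- hence again a product of normal crossing divisors in smooth varieties. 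Thus $X'$ is GSNC, its closed strata are the components of the preimages of the closed strata of $X$, and $B'$ restricts to a reduced simple normal crossing divisor on each of them.

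The main obstacle is exactly the local verification in the last two paragraphs: one must show that root covers and cyclic covers of a GSNC variety, taken along divisors that are permissible and restrict to simple normal crossing divisors on every closed stratum, preserve the ``product of normal crossings'' local picture --- this is where the genericity coming from Bloch--Gieseker (to keep branch loci in general position with respect to the strata and to one another) and the monomial description of $D$ near the strata are both used. The rest --- the choice of $m$, the induction on $r$, and the bookkeeping that $B'=\pi^{-1}(B)$ --- is routine.
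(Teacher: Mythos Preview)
Your proposal is correct and follows essentially the same approach as the paper: the paper's proof is a one-line reference to \cite{AB}~Theorem~17 (Kawamata's covering lemma), and what you have written is precisely that construction --- Bloch--Gieseker to make $\mathcal{O}_X(D_j)$ an $m$-th power, then a cyclic cover --- unwound and checked against the GSNC local model. The only cosmetic difference is that the original covering lemma treats all the $D_j$ simultaneously rather than inductively, which spares some of the bookkeeping you flag in your first paragraph, but the substance is the same.
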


\begin{proof}
The proof is similar to \cite{AB}~Theorem~17, where 
the $D_j$ play the role of the irreducible components of $D$.
\end{proof}


\section{Hodge theory on a GSNC pair}

We construct explicitly the mixed Hodge structures of Deligne \cite{Deligne2}
for GSNC pairs.

Let $(X,B)$ be a GSNC pair.
We do not assume that $X$ is compact at the moment.
We define a cohomological mixed Hodge complex on $X$ when $X$ is 
projective.
We can extend the construction in \cite{reducible} to a GSNC pair, 
but we use here the de Rham complex of Du Bois.
The underlying local system is the constant sheaf $\mathbf{Q}$ in our case,
while it is different and more difficult in the former case.

We define a {\em de Rham complex} $\bar{\Omega}_X^{\bullet}(\log B)$
by the following Mayer-Vietoris exact sequence:
\[
\begin{split}
&0 \to \bar{\Omega}_X^{\bullet}(\log B) \to 
\Omega^{\bullet}_{X^{[0]}}(\log B_{X^{[0]}}) \to
\Omega^{\bullet}_{X^{[1]}}(\log B_{X^{[1]}}) 
\to \\
&\dots \to \Omega^{\bullet}_{X^{[N]}}(\log B_{X^{[N]}}) \to 0
\end{split}
\]
where $N = \dim X$ and the arrows are the alternating sums of the restriction 
homomorphisms, where the signs are defined as in Proposition~\ref{sign}.
We have $\bar{\Omega}_X^0 \cong \mathcal{O}_X$, and 
\[
Ri_*\mathbf{C}_{X \setminus B} \cong \bar{\Omega}_X^{\bullet}(\log B)
\]
for the open immersion $i: X \setminus B \to X$.

We define a {\em weight filtration} on the complex 
$\bar{\Omega}^{\bullet}_X(\log B)$ by 
\[
\begin{split}
&0 \to W_q(\bar{\Omega}^{\bullet}_X(\log B))
\to W_q(\Omega^{\bullet}_{X^{[0]}}(\log B_{X^{[0]}})) 
\to W_{q+1}(\Omega^{\bullet}_{X^{[1]}}(\log B_{X^{[1]}})) \to \\
&\dots \to W_{q+N}(\Omega^{\bullet}_{X^{[N]}}(\log B_{X^{[N]}})) \to 0
\end{split}
\]
where the $W$'s from the second terms denote 
the filtration with respect to the order of log poles.
For example, $W_q(\Omega^{\bullet}_{X^{[0]}}(\log B_{X^{[0]}}))$ is the 
subcomplex of $\Omega^{\bullet}_{X^{[0]}}(\log B_{X^{[0]}})$ consisting
of logarithmic forms on $X^{[0]}$ whose log poles along $B_{X^{[0]}}$ have
order at most $q$.

Before we define a weight filtration on the $\mathbf{Q}$-level object, 
we recall the definition of a convolution of a complex of objects
in a triangulated category \cite{GM}.
Let
\[
a_0 \to a_1 \to \dots \to a_{n-1} \to a_n
\]
be a complex of objects.
If there exists a sequence of distinguished triangles
\[
b_{t+1}[-1] \to b_t \to a_t \to b_{t+1} \\
\]
for $0 \le t < n$ with an isomorphism $b_n \to a_n$,
then $b_0$ is said to be a {\em convolution} of the complex.
A convolution may not exist and may not be unique if it exists.

We also need to define a {\em canonical filtration} of a complex.
If $a_{\bullet}$ is a complex in an abelian category, then we define
\[
\tau_{\le q}(a_{\bullet})_n = \begin{cases} a_n \text{ if } n < q \\
\text{Ker}(a_q \to a_{q+1}) \\
0 \text{ if } n > q \end{cases}
\]
so that $H^n(\tau_{\le q}(a_{\bullet})) \cong H^n(a_{\bullet})$ if 
$n \le q$ and $\cong 0$ otherwise.

In the same way as \cite{reducible}, 
we can define a {\em weight filtration} $W_q(Ri_*\mathbf{Q}_{X \setminus B})$
as a convolution of the following complex of objects
\[
\begin{split}
&\tau_{\le q}(R(i_0)_*\mathbf{Q}_{X^{[0]} \setminus B_{X^{[0]}}})
\to \tau_{\le q+1}(R(i_1)_*\mathbf{Q}_{X^{[1]} \setminus B_{X^{[1]}}}) \to \\
&\dots \to 
\tau_{\le q+N}(R(i_N)_*\mathbf{Q}_{X^{[N]} \setminus B_{X^{[N]}}})
\end{split}
\]
where $\tau$ denotes the canonical filtration
and $i_p: X^{[p]} \setminus B_{X^{[p]}} \to X^{[p]}$ are open immersions, 
such that there are isomorphisms
\[
W_q(Ri_*\mathbf{Q}_{X \setminus B}) \otimes \mathbf{C} 
\cong W_q(\bar{\Omega}_X^{\bullet}(\log B))
\]
for all $q$.

We define the {\em Hodge filtration} as a stupid filtration:
\[
F^p(\bar{\Omega}^{\bullet}_X(\log B))
= \bar{\Omega}^{\ge p}_X(\log B).
\]
Then we have the following whose proof can be compared to 
\cite{reducible}~Lemma~3.2:

\begin{Lem}\label{Gr1}
\[
\begin{split}
&\text{Gr}_q^W(Ri_*\mathbf{Q}_{X \setminus B})
\cong \bigoplus_{p \ge 0,dim X^{[p]}-\dim Y=p+q} \mathbf{Q}_Y[-2p-q] \\
&\text{Gr}_q^W(\bar{\Omega}^{\bullet}_X(\log B))
\cong \bigoplus_{p \ge 0,dim X^{[p]}-\dim Y=p+q} \Omega_Y^{\bullet}[-2p-q] \\
&F^r(\text{Gr}_q^W(\bar{\Omega}^{\bullet}_X(\log B)))
\cong \bigoplus_{p \ge 0,dim X^{[p]}-\dim Y=p+q} \Omega_Y^{\ge r-p-q}[-2p-q]
\end{split}
\]
where the $p$ run over all non-negative integers and the
$Y$ run over all the closed strata of the pair $(X^{[p]},B_{X^{[p]}})$ 
of codimension $p+q$.
\end{Lem}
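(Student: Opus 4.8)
The plan is to compute the graded pieces of the weight filtration by working with the explicit Mayer–Vietoris presentations that define $Ri_*\mathbf{Q}_{X\setminus B}$, $\bar\Omega^\bullet_X(\log B)$ and their weight filtrations, and to deduce the three formulas in parallel, since the Hodge filtration statement will follow from the De Rham statement by tracking the stupid filtration through the same identification. The starting observation is that for a genuine simple normal crossing variety (or a smooth variety with a reduced simple normal crossing divisor) the associated graded $\mathrm{Gr}^W_j(\Omega^\bullet_V(\log D))$ is, by the classical Deligne computation, a direct sum of $\Omega^\bullet_{D^{(j)}}[-j]$ over the normalizations $D^{(j)}$ of $j$-fold intersections, with its Hodge filtration shifted by $j$; the same holds for $\mathrm{Gr}^W_j \tau_{\le j}(R(i_V)_*\mathbf{Q})$ on the topological side, giving $\mathbf{Q}_{D^{(j)}}[-2j]$. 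So for each fixed $X^{[p]}$, which is a disjoint union of smooth varieties carrying the simple normal crossing divisors $B_{X^{[p]}}$, I can apply this term by term.

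First I would filter: the complex of complexes that defines $W_q(\bar\Omega^\bullet_X(\log B))$ as a convolution has $X^{[p]}$ in homological degree $p$ carrying the weight-$(q+p)$ subcomplex of $\Omega^\bullet_{X^{[p]}}(\log B_{X^{[p]}})$. Taking the associated graded with respect to $W$ commutes with this convolution (the differentials of the Mayer–Vietoris complex are restriction maps, which are strict for the log-pole filtration), so $\mathrm{Gr}^W_q$ of the total object is the total complex of the diagram whose $p$-th term is $\mathrm{Gr}^W_{q+p}(\Omega^\bullet_{X^{[p]}}(\log B_{X^{[p]}}))$. By the classical computation this $p$-th term is $\bigoplus_Z \Omega^\bullet_Z[-(q+p)]$, where $Z$ runs over the strata of $(X^{[p]}, B_{X^{[p]}})$ of codimension $q+p$ inside $X^{[p]}$, i.e. over the closed strata $Y$ of $(X,B)$ with $\dim X^{[p]}-\dim Y=p+q$ — this is exactly the index set in the statement, and the cohomological shift $[-(q+p)]$ combines with a further shift $[-p]$ coming from the homological position in the Mayer–Vietoris complex to give $[-2p-q]$. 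The Hodge filtration: on each $\mathrm{Gr}^W_{q+p}$ the stupid filtration $F^r$ restricts to $\Omega^{\ge r-(q+p)}_Z[-(q+p)]$, which after the extra shift is $\Omega^{\ge r-p-q}_Y[-2p-q]$, as claimed. The rational version is identical with $\Omega^\bullet_Z$ replaced by $\mathbf{Q}_Z$ and $[-(q+p)]$ by $[-2(q+p)]$, so that the total shift is again $[-2p-q]$, and the comparison isomorphism $W_q(Ri_*\mathbf{Q})\otimes\mathbf{C}\cong W_q(\bar\Omega^\bullet_X(\log B))$ passes to graded pieces.

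The main point to be careful about — and I expect this to be the real content rather than a formality — is that after passing to $\mathrm{Gr}^W$ the total complex of the Mayer–Vietoris diagram has \emph{zero differential}, so that the total complex genuinely splits as the direct sum displayed and no spectral-sequence degeneration argument is hidden. This is where the simple normal crossing hypothesis on the strata is used: the maps induced on graded pieces by the alternating restriction maps identify, under Deligne's computation, with combinatorial incidence maps between constant (resp. $\Omega^\bullet$) sheaves on the various $Y$'s, and one must check these vanish. Concretely, a closed stratum $Y$ of $(X,B)$ of codimension $p+q$ appearing via $X^{[p]}$ also appears via $X^{[p\pm1]}$ only with a different value of $q$ — that is, for fixed total codimension the index $p$ is pinned down by $\dim X^{[p]}$, so distinct terms of the Mayer–Vietoris complex contribute to distinct weights and the induced differential on a fixed $\mathrm{Gr}^W_q$ lands in a summand indexed by a different $p$, forcing it to be zero. (This mirrors exactly the bookkeeping in \cite{reducible}~Lemma~3.2; the only new feature here is that the cells of the dual graph are products of simplices rather than simplices, which changes nothing in the local contractibility used to run the argument.) Once this vanishing is in place, the three isomorphisms are read off directly, compatibly with $F$ and with the $\mathbf{Q}$–$\mathbf{C}$ comparison.
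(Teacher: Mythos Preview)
Your approach is essentially the paper's: pass to $\mathrm{Gr}^W_q$ of the Mayer--Vietoris defining complex, apply Deligne's classical computation of $\mathrm{Gr}^W_{p+q}(\Omega^\bullet_{X^{[p]}}(\log B_{X^{[p]}}))$ on each smooth $X^{[p]}$, and combine the shift $[-(p+q)]$ from the residue isomorphism with the shift $[-p]$ from the position in the Mayer--Vietoris complex. The vanishing of the induced differential on graded pieces, which you rightly flag, is left implicit in the paper's direct-sum formula $\bigoplus_p \mathrm{Gr}^W_{p+q}(\cdots)[-p]$; the clean reason is simply that restriction of log forms from $X^{[p]}$ to $X^{[p+1]}$ preserves the order-of-pole filtration, hence lands in $W_{q+p}\subset W_{q+p+1}$ and dies in $\mathrm{Gr}^W_{q+p+1}$ --- your stratum-counting argument is a more roundabout way of saying the same thing. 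One arithmetic slip: on the rational side the intermediate shift is $[-(q+p)]$, not $[-2(q+p)]$, since $\mathrm{Gr}^\tau_{q+p}(R(i_p)_*\mathbf{Q})=R^{q+p}(i_p)_*\mathbf{Q}[-(q+p)]$; with the Mayer--Vietoris $[-p]$ this gives the stated $[-2p-q]$, whereas your $[-2(q+p)]$ would not.
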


\begin{proof}
We have 
\[
\begin{split}
&\text{Gr}_q^W(Ri_*\mathbf{Q}_{X \setminus B})
\cong \bigoplus_p R^{p+q}(i_p)_*\mathbf{Q}_{X^{[p]} \setminus B_{X^{[p]}}}
[-2p-q] \\
&\cong \bigoplus_{p \ge 0,dim X^{[p]}-\dim Y=p+q} \mathbf{Q}_Y[-2p-q]  \\
&\text{Gr}_q^W(\bar{\Omega}^{\bullet}_X(\log B))
\cong \bigoplus_p \text{Gr}_{p+q}^W(\Omega^{\bullet}_{X^{[p]}}
(\log B_{X^{[p]}}))[-p] \\
&\cong \bigoplus_{p \ge 0,dim X^{[p]}-\dim Y=p+q} \Omega_Y^{\bullet}[-2p-q] \\
&F^r(\text{Gr}_q^W(\bar{\Omega}^{\bullet}_X(\log B)))
\cong \bigoplus_p \text{Gr}_{p+q}^W(F^r(\Omega^{\bullet}_{X^{[p]}}
(\log B_{X^{[p]}})))[-p] \\
&\cong \bigoplus_{p \ge 0,dim X^{[p]}-\dim Y=p+q} \Omega_Y^{\ge r-p-q}[-2p-q]
\end{split}
\]
\end{proof}

As a corollary, we have the following whose proof is similar to 
\cite{reducible}~Theorem 3.3:

\begin{Thm}\label{absolute case}
Let $(X,B)$ be a GSNC pair.
Assume that $X$ is projective.
Then 
\[
((Ri_*\mathbf{Q}_{X \setminus B}, W), 
(\bar{\Omega}^{\bullet}_X(\log B), W, F))
\]
is a cohomological mixed Hodge $\mathbf{Q}$-complex on $X$.
\end{Thm}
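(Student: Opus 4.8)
The plan is to verify the axioms of a cohomological mixed Hodge $\mathbf{Q}$-complex in the sense of Deligne: (i) $(Ri_*\mathbf{Q}_{X\setminus B},W)$ is a filtered complex of $\mathbf{Q}$-sheaves bounded below, with a comparison isomorphism $\alpha$ over $\mathbf{C}$ to $(\bar\Omega_X^\bullet(\log B),W,F)$ compatible with the weight filtrations; and (ii) for every integer $q$, the complex $\mathrm{Gr}_q^W(Ri_*\mathbf{Q}_{X\setminus B})$, equipped with the induced Hodge filtration from $\mathrm{Gr}_q^W(\bar\Omega_X^\bullet(\log B))$, underlies a pure Hodge structure of weight $q$ — i.e.\ a cohomological Hodge complex of weight $q$ on $X$. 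Everything except (ii) is essentially bookkeeping: the De Rham complex and its two filtrations, as well as the weight filtration on the $\mathbf{Q}$-side, were constructed above as convolutions (total complexes of Mayer--Vietoris-type double complexes), and the comparison isomorphism $Ri_*\mathbf{C}_{X\setminus B}\cong\bar\Omega_X^\bullet(\log B)$ and its filtered refinement $W_q(Ri_*\mathbf{Q}_{X\setminus B})\otimes\mathbf{C}\cong W_q(\bar\Omega_X^\bullet(\log B))$ are already recorded. One also checks that the weight and Hodge filtrations are biregular and that the filtered complexes live in the appropriate derived categories; this follows term by term from the corresponding facts for each $\Omega^\bullet_{X^{[p]}}(\log B_{X^{[p]}})$, which are the classical log-de Rham complexes on the smooth varieties $X^{[p]}$ with the simple normal crossing divisors $B_{X^{[p]}}$.

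The main content is step (ii), and here Lemma~\ref{Gr1} does the decisive work. By that lemma,
\[
\mathrm{Gr}_q^W(Ri_*\mathbf{Q}_{X\setminus B})
\cong\bigoplus_{\substack{p\ge 0\\ \dim X^{[p]}-\dim Y=p+q}}\mathbf{Q}_Y[-2p-q],
\]
the sum over closed strata $Y$ of the smooth SNC pairs $(X^{[p]},B_{X^{[p]}})$ of codimension $p+q$, and the comparison isomorphism carries this, after tensoring with $\mathbf{C}$, to $\bigoplus\Omega_Y^\bullet[-2p-q]$ with Hodge filtration $\bigoplus\Omega_Y^{\ge r-p-q}[-2p-q]$. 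Thus $\mathrm{Gr}_q^W(Ri_*\mathbf{Q}_{X\setminus B})$ is a direct sum of shifted constant sheaves on the smooth projective varieties $Y$, and the induced Hodge filtration is (a shift of) the stupid filtration on $\Omega_Y^\bullet$. The cohomology of such a summand in degree $n$ is $H^{n-2p-q}(Y,\mathbf{Q})$, a pure Hodge structure of weight $n-2p-q$; but on that summand we have $2p+q$ equal to $q$ only after accounting for the Tate twist implicit in the $[-2p-q]$ shift together with the $p$-fold convolution — concretely, a stratum $Y$ of codimension $p+q$ in $X^{[p]}$ contributes, via the Gysin-type identifications in the proof of Lemma~\ref{Gr1}, a Hodge structure $H^{\bullet}(Y)(-p-q)\oplus\cdots$ so that the total weight on $\mathrm{Gr}_q^W$ in the appropriate cohomological degree is exactly $q$. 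I would verify this weight count explicitly, reducing it to the classical statement (Deligne) that the $q$-th graded piece of the weight filtration on $Rj_*\mathbf{Q}$ for a normal crossing complement is pure of weight $q$, applied on each smooth $X^{[p]}$ and then reassembled through the Mayer--Vietoris convolution.

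The hard part will be precisely this reassembly: one must check that the convolution of the filtered complexes $\tau_{\le q+p}(R(i_p)_*\mathbf{Q}_{X^{[p]}\setminus B_{X^{[p]}}})$ really computes a weight filtration whose graded pieces are pure — that is, that the differentials in the Mayer--Vietoris complex are strictly compatible with the filtrations and that no cross terms spoil the purity of $\mathrm{Gr}_q^W$. This is exactly the point that is routine for an honest SNC variety (treated in \cite{reducible}~Theorem~3.3) and must be redone here in the generalized setting, where each cell of the dual graph is a product of simplices rather than a single simplex; the local contractibility of these product cells, already used in Proposition~\ref{Gr1}'s predecessor, guarantees the needed exactness, and I would cite the parallel argument of \cite{reducible} for the compatibility of the weight filtrations with the convolution, checking only that the product-of-simplices combinatorics does not alter the conclusion. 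Once purity of every $\mathrm{Gr}_q^W$ is established, the mixed Hodge complex axioms are satisfied and the theorem follows.
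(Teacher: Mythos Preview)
Your approach is the same as the paper's: invoke Lemma~\ref{Gr1} to identify $\mathrm{Gr}_q^W$ with a direct sum of shifted constant sheaves on smooth projective strata $Y$, then check that each summand is a cohomological Hodge complex of weight $q$. The paper's proof is a two-line computation once Lemma~\ref{Gr1} is in hand: for $Y$ a closed stratum of $(X^{[p]},B_{X^{[p]}})$ of codimension $p+q$, the triple $(\mathbf{Q}_Y,\Omega_Y^\bullet,F(-p-q))$ with $F(-p-q)^r=F^{r-p-q}$ is a cohomological Hodge complex of weight $2(p+q)$, so its shift by $[-2p-q]$ has weight $2(p+q)-(2p+q)=q$.

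You overcomplicate the last step. What you call ``the hard part'' --- checking that the convolution really gives a filtration with pure graded pieces, that no cross terms appear, that strictness holds --- is not additional work to be done here: it is precisely the content of Lemma~\ref{Gr1}, which already computes $\mathrm{Gr}_q^W$ as an explicit direct sum. Once that lemma is proved, there is nothing left to ``reassemble''; one simply reads off the weight of each summand. Your Tate-twist discussion is the right intuition but you never write the clean arithmetic $2(p+q)-(2p+q)=q$ that settles it. Drop the paragraph about reassembly and replace your weight discussion with that one equation, and your proof matches the paper's.
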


\begin{proof}
If $Y$ is a closed stratum of the pair $(X^{[p]},B_{X^{[p]}})$ 
of codimension $p+q$, then
\[
(\mathbf{Q}_Y,\Omega_Y^{\bullet},F(-p-q))
\]
is a cohomological Hodge $\mathbf{Q}$-complex of weight $2(p+q)$, where 
$F(-p-q)^r=F^{r-p-q}$.
Hence
\[
(\mathbf{Q}_Y[-2p-q],\Omega_Y^{\bullet}[-2p-q],F(-p-q)[-2p-q])
\]
is a cohomological Hodge $\mathbf{Q}$-complex of weight $2(p+q)-2p-q=q$.
\end{proof}

\begin{Cor}\label{degeneration1}
The weight spectral sequence
\[
{}_WE_1^{p,q} = H^{p+q}(\text{Gr}_{-p}^W(Ri_*\mathbf{Q}_{X \setminus B}))
\Rightarrow H^{p+q}(\mathbf{Q}_{X \setminus B})
\]
degenerates at $E_2$, and the Hodge spectral sequence
\[
{}_FE_1^{p,q} = H^q(\bar{\Omega}_X^p(\log B))
\Rightarrow H^{p+q}(\bar{\Omega}_X^{\bullet}(\log B))
\]
degenerates at $E_1$.
\end{Cor}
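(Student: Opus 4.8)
The plan is to obtain both degenerations as the standard output of Deligne's theory of cohomological mixed Hodge complexes \cite{Deligne2}, applied to the complex $((Ri_*\mathbf{Q}_{X\setminus B},W),(\bar{\Omega}_X^\bullet(\log B),W,F))$ shown to be such a complex in the preceding Theorem. Since $X$ is projective, its hypercohomology carries a mixed Hodge structure and the two degenerations are precisely the strictness statements built into that formalism, exactly as in \cite{reducible}. The one point that is not purely formal is to keep track of the degree shifts $[-2p-q]$ and the Tate twists $F(-p-q)$ appearing in Lemma~\ref{Gr1}, so that the $E_1$-terms of the weight spectral sequence are correctly identified as pure Hodge structures.

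First I would treat the weight spectral sequence. By Lemma~\ref{Gr1} together with the weight computation in the proof of the preceding Theorem, $\text{Gr}_{-p}^W(Ri_*\mathbf{Q}_{X\setminus B})$ is a finite direct sum of cohomological Hodge complexes, each of weight $-p$, obtained from the constant sheaves on the smooth projective closed strata $Y$ by shifting and Tate twisting. Hence
\[
{}_WE_1^{p,q}=H^{p+q}\bigl(\text{Gr}_{-p}^W(Ri_*\mathbf{Q}_{X\setminus B})\bigr)
\]
underlies a pure Hodge structure of weight $(-p)+(p+q)=q$. The differential $d_1$ preserves the weight $q$, so ${}_WE_2^{p,q}$ is again pure of weight $q$; and for $r\ge 2$ the differential
\[
d_r\colon {}_WE_r^{p,q}\longrightarrow {}_WE_r^{\,p+r,\,q-r+1}
\]
would be a morphism of pure Hodge structures of weights $q$ and $q-r+1$, hence vanishes. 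Therefore the weight spectral sequence degenerates at $E_2$.

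Next I would treat the Hodge spectral sequence. By Lemma~\ref{Gr1} the spectral sequence of the Hodge filtration on each graded piece $\text{Gr}_q^W(\bar{\Omega}_X^\bullet(\log B))\cong\bigoplus\Omega_Y^\bullet[-2p-q]$ is a direct sum of Hodge--de Rham spectral sequences of the smooth projective closed strata $Y$, which degenerate at $E_1$ by classical Hodge theory. Given the $E_2$-degeneration of the weight spectral sequence just proved, the standard properties of mixed Hodge complexes (\cite{Deligne2}) propagate this $E_1$-degeneration from all the $\text{Gr}_q^W$ to $\bar{\Omega}_X^\bullet(\log B)$ itself. Concretely, one verifies
\[
\dim H^n\bigl(\bar{\Omega}_X^\bullet(\log B)\bigr)=\sum_{a+b=n}\dim H^b\bigl(\bar{\Omega}_X^a(\log B)\bigr),
\]
the left side being $\dim H^n(X\setminus B,\mathbf{C})$, which by the degenerate weight spectral sequence and classical Hodge theory on the strata equals the sum of the right-hand Hodge numbers; for a bounded complex of sheaves this equality is equivalent to the $E_1$-degeneration of its stupid-filtration spectral sequence.

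The step I expect to be the real obstacle is the bookkeeping underlying the first paragraph: one must verify that the shifts and twists of Lemma~\ref{Gr1} really do make each summand of $\text{Gr}_q^W(Ri_*\mathbf{Q}_{X\setminus B})$ a cohomological Hodge complex of the single weight $q$ (strata of different codimensions in the various $X^{[p]}$ contributing with the same weight $q$ after the Tate twist), so that Deligne's purity and strictness arguments apply to our $E_1$-terms verbatim. Once that identification is secured, both degenerations follow formally along the lines above.
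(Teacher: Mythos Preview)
Your proposal is correct and follows essentially the same approach as the paper: the paper's entire proof is the one-line citation ``This is \cite{Deligne3}~8.1.9,'' and what you have written is precisely an unpacking of that reference---once the preceding Theorem establishes that $((Ri_*\mathbf{Q}_{X\setminus B},W),(\bar{\Omega}_X^\bullet(\log B),W,F))$ is a cohomological mixed Hodge $\mathbf{Q}$-complex, both degenerations are formal consequences of Deligne's general theory. The bookkeeping you flag as the ``real obstacle'' (that the shifts and Tate twists in Lemma~\ref{Gr1} make each $\text{Gr}_q^W$ pure of weight $q$) is exactly what was already checked in the proof of the preceding Theorem, so there is nothing further to verify here; note also that the relevant formalism is in \cite{Deligne3} (Hodge~III) rather than \cite{Deligne2}.
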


\begin{proof}
This is \cite{Deligne3}~8.1.9.
\end{proof}

When $B=0$, we have more degenerations:

\begin{Cor}\label{degeneration10}
The weight spectral sequence of the Hodge pieces
\[
{}_WE_1^{p,q} = H^{p+q}(\text{Gr}_{-p}^W(\bar{\Omega}_X^r))
\Rightarrow H^{p+q}(\bar{\Omega}_X^r)
\]
degenerates at $E_2$ for all $r$.
\end{Cor}

\begin{proof}
The differentials ${}_Wd_1^{p,q}$ of the weight spectral sequence in
the previous corollary preserve the degree of the differentials.
Therefore the $E_2$-degeneration of the third spectral sequence is 
a consequence of the first two degenerations.
\end{proof}


\section{Relativization}

We can easily generalize the results in the previous section to the relative 
setting when all the closed strata are smooth over the base.

We consider the following situation:
$(X,B)$ is a pair of a GSNC variety and a GSNC divisor, 
$S$ is a smooth algebraic variety, which is not necessarily complete,
and $f: X \to S$ is a projective surjective morphism.
We assume that, for each closed stratum $Y$ of the pair $(X,B)$, 
the restriction $f \vert_Y: Y \to S$ is smooth.

We define a {\em relative de Rham complex} 
$\bar{\Omega}_{X/S}^{\bullet}(\log B)$ by 
the following exact sequence
\[
\begin{split}
&0 \to \bar{\Omega}_{X/S}^{\bullet}(\log B) \to 
\bar{\Omega}_{X^{[0]}/S}^{\bullet}(\log B_{X^{[0]}}) 
\to \bar{\Omega}_{X^{[1]}/S}^{\bullet}(\log B_{X^{[1]}}) \to \\
&\dots \to \bar{\Omega}_{X^{[N]}/S}^{\bullet}(\log B_{X^{[N]}}) 
\to 0.
\end{split}
\]
In particular we have 
\[
\bar{\Omega}^0_{X/S}(\log B) \cong \mathcal{O}_{X}.
\]

A {\em weight filtration} on the complex 
$\bar{\Omega}^{\bullet}_{X/S}(\log B)$ is defined by using the 
filtration with respect to the order of log poles on the closed strata 
as in the previous section:
\[
\begin{split}
&0 \to W_q(\bar{\Omega}_{X/S}^{\bullet}(\log B)) 
\to W_q(\bar{\Omega}_{X^{[0]}/S}^{\bullet}(\log B_{X^{[0]}})) \\
&\to W_{q+1}(\bar{\Omega}_{X^{[1]}/S}^{\bullet}(\log B_{X^{[1]}})) \to \\
&\dots \to W_{q+N}(\bar{\Omega}_{X^{[N]}/S}^{\bullet}(\log B_{X^{[N]}}) 
\to 0.
\end{split}
\]
We define a {\em Hodge filtration} by
\[
F^p(\bar{\Omega}^{\bullet}_X(\log B))
= \bar{\Omega}^{\ge p}_X(\log B).
\]

\begin{Lem}
There is an isomorphism
\[
Ri_*\mathbf{C}_{X \setminus B} \otimes f^{-1}\mathcal{O}_S \cong 
\bar{\Omega}_{X/S}^{\bullet}(\log B)
\]
such that
\[
W_q(Ri_*\mathbf{C}_{X \setminus B}) \otimes f^{-1}\mathcal{O}_S \cong 
W_q(\bar{\Omega}^{\bullet}_{X/S}(\log B)).
\]
\end{Lem}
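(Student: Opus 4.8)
The plan is to reduce the statement to the \emph{relative} logarithmic comparison theorem on each closed stratum and then to propagate it to $X$ through the two Mayer-Vietoris resolutions that define the objects $Ri_*\mathbf{C}_{X\setminus B}$ and $\bar{\Omega}^{\bullet}_{X/S}(\log B)$.

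First I would establish the statement one stratum at a time. Fix a closed stratum $Y$ of the pair $(X,B)$ and write $g=f\vert_Y:Y\to S$ (smooth and surjective by hypothesis), $B_Y=B\vert_Y$, $U_Y=Y\setminus B_Y$, and $i_Y:U_Y\hookrightarrow Y$. Since every closed stratum of $(Y,B_Y)$ is also a closed stratum of $(X,B)$, all of them are smooth over $S$; hence $B_Y$ is a relative simple normal crossing divisor over $S$. Filtering $\Omega^{\bullet}_{Y/S}(\log B_Y)$ by the order of log poles, its associated graded pieces are relative De Rham complexes $\Omega^{\bullet}_{Y_I/S}$ of the strata $Y_I$ of $B_Y$, which are smooth over $S$, twisted by residue lines; so the relative holomorphic Poincar\'e lemma applied on each $Y_I$ yields a filtered quasi-isomorphism
\[
\big(R(i_Y)_*\mathbf{C}_{U_Y}\otimes g^{-1}\mathcal{O}_S,\,W\big)
\ \xrightarrow{\ \sim\ }\
\big(\Omega^{\bullet}_{Y/S}(\log B_Y),\,W\big)
=\big(\bar{\Omega}^{\bullet}_{Y/S}(\log B_Y),\,W\big),
\]
where $W$ on the left is the canonical filtration $\tau$ shifted as in \S 2 and $W$ on the right is the pole-order filtration. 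This is the relative avatar of the comparison isomorphism used in \S 2, and it is functorial for the restriction maps $Y'\hookrightarrow Y$ between strata, i.e.\ for pullback of relative logarithmic forms.

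Next I would assemble. The Mayer-Vietoris sequence of \S 1, applied to the boundary-free GSNC variety $X\setminus B$, resolves $\mathbf{C}_{X\setminus B}$ by the sheaves $\mathbf{C}_{X^{[p]}\setminus B_{X^{[p]}}}$; pushing this resolution forward along $i$ and along the finite maps $X^{[p]}\to X$, and tensoring with $f^{-1}\mathcal{O}_S$ (an exact operation over the constant sheaf), exhibits $Ri_*\mathbf{C}_{X\setminus B}\otimes f^{-1}\mathcal{O}_S$ as the convolution of the complex of objects whose $p$-th term is the pushforward of $R(i_p)_*\mathbf{C}_{X^{[p]}\setminus B_{X^{[p]}}}\otimes f^{-1}\mathcal{O}_S$. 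On the other hand, the defining Mayer-Vietoris sequence of $\bar{\Omega}^{\bullet}_{X/S}(\log B)$ exhibits it as the convolution of the complex whose $p$-th term is the pushforward of $\bar{\Omega}^{\bullet}_{X^{[p]}/S}(\log B_{X^{[p]}})$. By the previous paragraph these two complexes of objects are isomorphic term by term, and the isomorphisms commute with the alternating sums of the restriction maps; hence their convolutions are isomorphic, which gives $Ri_*\mathbf{C}_{X\setminus B}\otimes f^{-1}\mathcal{O}_S\cong\bar{\Omega}^{\bullet}_{X/S}(\log B)$. The compatibility $W_q(Ri_*\mathbf{C}_{X\setminus B})\otimes f^{-1}\mathcal{O}_S\cong W_q(\bar{\Omega}^{\bullet}_{X/S}(\log B))$ follows in the same way from the convolution descriptions of the two weight filtrations (shifted $\tau$ on the topological side, shifted pole-order filtration on the De Rham side), because the stratum-wise comparison is filtered.

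The main obstacle I expect is organizational rather than conceptual: one must check that the relative comparison isomorphisms on the strata are genuinely compatible with the \v{C}ech-type restriction differentials of the Mayer-Vietoris sequences and with the convolution structure of the weight filtrations, so that the term-by-term isomorphisms glue into an isomorphism of convolutions. This is exactly the bookkeeping that makes the absolute construction of \S 2 work, and here one only has to carry along the extra factor $f^{-1}\mathcal{O}_S$. A routine preliminary point is that the hypotheses indeed force each $B_Y$, together with all of its strata, to be smooth over $S$; this is immediate since those strata are among the closed strata of the pair $(X,B)$.
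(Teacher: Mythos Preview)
Your proposal is correct and is exactly the argument the paper has in mind: the paper does not write out a proof of this Lemma, but the opening sentence of \S4 (``We can easily generalize the results in the previous section to the relative setting when all the closed strata are smooth over the base'') makes clear that one is meant to rerun the construction of \S2 with the absolute log de Rham complexes replaced by their relative versions and the holomorphic Poincar\'e lemma replaced by the relative one. That is precisely what you do: the stratum-wise filtered comparison (canonical filtration versus pole-order filtration) reduces to the relative Poincar\'e lemma on each closed stratum of $(Y,B_Y)$, which applies because all these strata are smooth over $S$ by hypothesis, and then the two Mayer--Vietoris convolutions assemble the global statement and its filtered refinement.
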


We have again:

\begin{Lem}
\[
\begin{split}
&\text{Gr}_q^W(\bar{\Omega}^{\bullet}_{X/S}(\log B))
\cong \bigoplus_{p \ge 0,dim X^{[p]}-\dim Y=p+q} \Omega_{Y/S}^{\bullet}[-2p-q] 
\\
&F^r(\text{Gr}_q^W(\bar{\Omega}^{\bullet}_{X/S}(\log B)))
\cong \bigoplus_{p \ge 0,dim X^{[p]}-\dim Y=p+q} \Omega_{Y/S}^{\ge r-p-q}
[-2p-q]
\end{split}
\]
where the $p$ run over all non-negative integers and the
$Y$ run over all the closed strata of the pair $(X^{[p]},B_{X^{[p]}})$ 
of codimension $p+q$.
\end{Lem}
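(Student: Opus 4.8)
The statement is the relative analogue of Lemma~\ref{Gr1}, so the plan is to reduce it to that non-relative computation applied stratum by stratum. The key point is that the relative De~Rham complex $\bar{\Omega}^{\bullet}_{X/S}(\log B)$ is built from the $\bar{\Omega}^{\bullet}_{X^{[p]}/S}(\log B_{X^{[p]}})$ through exactly the same Mayer--Vietoris mechanism as in the absolute case, with the weight filtration shifted by $p$ on the $p$-th term. Consequently the associated graded splits as a direct sum over $p$,
\[
\text{Gr}_q^W(\bar{\Omega}^{\bullet}_{X/S}(\log B))
\cong \bigoplus_p \text{Gr}_{p+q}^W\bigl(\bar{\Omega}^{\bullet}_{X^{[p]}/S}
(\log B_{X^{[p]}})\bigr)[-p],
\]
and the same formula holds after applying $F^r$, precisely as displayed in the proof of Lemma~\ref{Gr1}. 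This reduces everything to understanding $\text{Gr}^W$ of the relative log De~Rham complex on a single GSNC variety whose strata are all smooth over $S$.

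**Main step.** On a fixed piece $X^{[p]}$, each irreducible component is smooth, $B_{X^{[p]}}$ restricts to a genuine simple normal crossing divisor relative to $S$ (because $f|_Y$ is smooth on every closed stratum), and the order-of-log-poles filtration on $\bar{\Omega}^{\bullet}_{X^{[p]}/S}(\log B_{X^{[p]}})$ is the standard relative weight filtration. The classical residue computation then gives
\[
\text{Gr}_{p+q}^W\bigl(\bar{\Omega}^{\bullet}_{X^{[p]}/S}(\log B_{X^{[p]}})\bigr)
\cong \bigoplus_{\dim X^{[p]} - \dim Y = p+q} \Omega^{\bullet}_{Y/S}[-(p+q)]
\]
where $Y$ ranges over the closed strata of $(X^{[p]},B_{X^{[p]}})$ of codimension $p+q$, and after the Hodge filtration the right-hand side becomes $\Omega^{\ge r-p-q}_{Y/S}[-(p+q)]$; here I use that for a smooth morphism the relative residue exact sequences are locally split in the $S$-direction, so the Poincar\'e residue isomorphism is $f^{-1}\mathcal{O}_S$-linear and compatible with $F$. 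Combining this with the degree shift $[-p]$ from the Mayer--Vietoris reindexing produces the overall shift $[-2p-q]$, yielding the two displayed isomorphisms.

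**Obstacle.** The only delicate point is checking that the Poincar\'e residue maps and the splitting of the graded pieces are genuinely $f^{-1}\mathcal{O}_S$-linear and strict for $F$ in the relative setting, i.e.\ that passing to the relative complex commutes with taking $\text{Gr}^W$ and with $F^r$. This is where smoothness of every $f|_Y$ is essential: it guarantees that $\Omega^1_{Y/S}$ is locally free and that the relative log forms on $X^{[p]}$ sit in the expected short exact sequences with locally free quotients, so no higher $\mathrm{Tor}$ against $\mathcal{O}_S$ intervenes. Once this compatibility is in place, the computation is formally identical to that of Lemma~\ref{Gr1}, now carried out for $\Omega^{\bullet}_{Y/S}$ in place of $\Omega^{\bullet}_Y$, and the proof consists of writing down the same chain of isomorphisms with the relative decorations inserted.
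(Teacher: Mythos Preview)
Your proposal is correct and follows exactly the approach the paper intends: the paper does not even write out a separate proof for this relative lemma (it just says ``We have again:''), signaling that one should repeat the computation of Lemma~\ref{Gr1} verbatim with $\Omega^{\bullet}_{Y/S}$ in place of $\Omega^{\bullet}_Y$, which is precisely what you do. Your added remark about the smoothness hypothesis ensuring $f^{-1}\mathcal{O}_S$-linearity of the residue maps is a reasonable point that the paper leaves implicit.
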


The following theorem and corollaries are 
similar to \cite{reducible}~Theorem~4.1 and Corollary~4.2:

\begin{Thm}\label{relative case}
\[
((R^n(f \circ i)_*\mathbf{Q}_{X \setminus B}, W(-n)), 
(R^nf_*\bar{\Omega}_{X/S}^{\bullet}(\log B), W(-n), F))
\]
is a variation of rational mixed Hodge structures on $S$.
\end{Thm}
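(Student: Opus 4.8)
The plan is to deduce this relative statement from the absolute case (the cohomological mixed Hodge complex already constructed on each compactification of the fibers) by applying the standard machinery that turns a family of cohomological mixed Hodge complexes into a variation of mixed Hodge structures, exactly as in \cite{reducible}~Theorem~4.1. First I would observe that, since each $f\vert_Y\colon Y\to S$ is smooth and projective, the relative De Rham complexes $\bar\Omega^{\bullet}_{X^{[p]}/S}(\log B_{X^{[p]}})$ have cohomology sheaves $R^nf_*$ of them that are locally free, by the relative degeneration of the Hodge-to-De Rham spectral sequence on smooth projective families (Deligne); combining this over the Mayer--Vietoris resolution defining $\bar\Omega^{\bullet}_{X/S}(\log B)$ and using the weight spectral sequence of Lemma~\ref{Gr1} in its relative form (the preceding Lemma), whose $E_1$-terms are $R^nf_*\Omega^{\bullet}_{Y/S}$-type sheaves, one gets that $R^nf_*\bar\Omega^{\bullet}_{X/S}(\log B)$ is locally free and that the weight and Hodge filtrations are subbundles. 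This is where I would record that the weight spectral sequence degenerates at $E_2$ and the Hodge one at $E_1$ fiberwise, using Corollary~\ref{degeneration1} applied to each fiber.

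Next I would identify the local system: by the relative version of the comparison isomorphism (the Lemma stating $Ri_*\mathbf{C}_{X\setminus B}\otimes f^{-1}\mathcal{O}_S\cong\bar\Omega^{\bullet}_{X/S}(\log B)$, compatibly with $W$), the sheaf $R^n(f\circ i)_*\mathbf{Q}_{X\setminus B}$ is a local system on $S$ whose fibre at $s$ is $H^n$ of the corresponding GSNC pair over $s$, and $R^nf_*\bar\Omega^{\bullet}_{X/S}(\log B)$ is its associated vector bundle with the Gauss--Manin connection. The filtration $W$ is defined over $\mathbf{Q}$ on the local system (as a convolution of canonical filtrations $\tau_{\le\bullet}$, which are defined integrally), and $F$ comes from the stupid filtration; the shift $W(-n)$ is the usual Tate twist making weights come out right degree by degree. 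Then fiberwise each $(\mathrm{Gr}^W_q,\ldots)$ is, by Lemma~\ref{Gr1} in families, a direct sum of shifted Hodge structures $H^{n-2p-q}$ of the smooth projective varieties $Y_s$, so the graded pieces form polarizable variations of pure Hodge structure of the stated weights; this gives admissibility/local-freeness of $\mathrm{Gr}^W$ and hence the required properties of a variation of mixed Hodge structures.

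Finally I would check the two axioms of a variation of mixed Hodge $\mathbf{Q}$-structures in the sense of Steenbrink--Zucker: that the Hodge filtration $F$ on $R^nf_*\bar\Omega^{\bullet}_{X/S}(\log B)$ is holomorphic and satisfies Griffiths transversality $\nabla F^p\subset F^{p-1}\otimes\Omega^1_S$ with respect to the Gauss--Manin connection, and that the filtration on each $\mathrm{Gr}^W$ defines a polarizable variation of pure Hodge structure. Transversality is inherited from the smooth projective pieces $Y$ through the Mayer--Vietoris and weight spectral sequences, since all the maps in sight (restrictions, connecting maps) are morphisms of filtered $\mathcal{D}$-modules; polarizability of the graded pieces is classical for smooth projective families. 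The main obstacle, as in \cite{reducible}, is bookkeeping: one must verify that the weight spectral sequence degenerates at $E_2$ \emph{as a spectral sequence of variations} (not just fiberwise), which amounts to checking that its differentials are strictly compatible with $F$ — this follows because each $E_1$-page entry underlies a pure Hodge structure of the correct weight by Lemma~\ref{Gr1}, and morphisms of pure Hodge structures are strict, so the argument is the relative echo of the proof of the absolute Theorem above; care with the Tate twists $W(-n)$ and the shifts $[-2p-q]$ is the only real subtlety.
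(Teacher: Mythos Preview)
Your proposal is correct and follows the same route the paper intends: the paper gives no explicit proof here, merely stating that the theorem and its corollaries are ``similar to \cite{reducible}~Theorem~4.1 and Corollary~4.2,'' and your sketch is precisely an unpacking of that reference---use the relative $\text{Gr}^W$ computation (the preceding Lemma) to reduce to the smooth projective strata $Y\to S$, invoke Deligne's classical result that each $R^nf_*\Omega^{\bullet}_{Y/S}$ underlies a polarizable VHS, and assemble via the weight spectral sequence whose $E_2$-degeneration is forced by strictness of morphisms of pure Hodge structures. You have in fact written out more than the paper does.
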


\begin{Cor}\label{degeneration2}
The relative weight spectral sequence
\[
{}_WE_1^{p,q} = R^{p+q}f_*\text{Gr}_{-p}^W(Ri_*\mathbf{Q}_{X \setminus B})
\Rightarrow R^{p+q}(f \circ i)_*\mathbf{Q}_{X \setminus B}
\]
degenerates at $E_2$, and the relative Hodge spectral sequence
\[
{}_FE_1^{p,q} = R^qf_*\bar{\Omega}_{X/S}^p(\log B)
\Rightarrow R^{p+q}f_*\bar{\Omega}_{X/S}^{\bullet}(\log B)
\]
degenerates at $E_1$.
\end{Cor}

If $B=0$, then we have again:

\begin{Cor}\label{degeneration20}
The weight spectral sequence of the Hodge pieces
\[
{}_{W,r}E_1^{p,q} = R^{p+q}f_*\text{Gr}_{-p}^W(\bar{\Omega}_{X/S}^r)
\Rightarrow R^{p+q}f_*(\bar{\Omega}_{X/S}^r)
\]
degenerates at $E_2$ for all $r$.
\end{Cor}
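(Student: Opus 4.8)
The plan is to deduce Corollary~\ref{degeneration20} from Corollary~\ref{degeneration2} in exactly the same way that Corollary~\ref{degeneration10} was deduced from Corollary~\ref{degeneration1} in the absolute case. Setting $B=0$, we have the weight spectral sequence
\[
{}_WE_1^{p,q} = R^{p+q}f_*\text{Gr}_{-p}^W(Ri_*\mathbf{Q}_X)
\Rightarrow R^{p+q}f_*\mathbf{Q}_X
\]
which degenerates at $E_2$ by Corollary~\ref{degeneration2}. Under the comparison isomorphism $Ri_*\mathbf{C}_X \otimes f^{-1}\mathcal{O}_S \cong \bar{\Omega}_{X/S}^{\bullet}$ and its compatibility with $W$, the complex $\bar{\Omega}_{X/S}^{\bullet}$ carries both $W$ and the Hodge (stupid) filtration $F$, and the two filtrations are compatible in the sense that $\text{Gr}_q^W$ of the De Rham complex is again a direct sum of shifted de Rham complexes $\Omega_{Y/S}^{\bullet}[-2p-q]$ by the second Lemma of \S 3. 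The key observation, exactly as in the proof of Corollary~\ref{degeneration10}, is that the differentials ${}_Wd_1^{p,q}$ are induced by the alternating sums of restriction maps between closed strata, hence are morphisms of complexes that strictly respect the Hodge degree; therefore they decompose into a direct sum over $r$ of differentials on the Hodge pieces $\bar{\Omega}_{X/S}^r$.

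Concretely, I would first record that the stupid filtration $F$ splits the spectral sequence ${}_WE_\bullet$ into the direct sum, over $r \ge 0$, of the spectral sequences ${}_{W,r}E_\bullet$ of the Hodge pieces; this is a formal consequence of the $E_1$-degeneration of the Hodge spectral sequence (Corollary~\ref{degeneration2}, second half) together with the strictness of the restriction maps with respect to $F$. Indeed $\text{Gr}_F^r\,\text{Gr}_q^W(\bar{\Omega}_{X/S}^{\bullet}) \cong \bigoplus \Omega_{Y/S}^r[-r]$ placed in the appropriate position, and the differential ${}_Wd_1$ preserves $r$. Then the $E_2$-degeneration of ${}_WE_\bullet$ forces the $E_2$-degeneration of each direct summand ${}_{W,r}E_\bullet$, because a differential in a direct sum of spectral sequences vanishes if and only if it vanishes on each summand, and the abutments match: $\bigoplus_r R^{p+q}f_*(\bar{\Omega}_{X/S}^r)$ is the associated graded of $R^{p+q}f_*\bar{\Omega}_{X/S}^{\bullet} \cong R^{p+q}(f)_*\mathbf{Q}_X \otimes \mathcal{O}_S$ for the Hodge filtration.

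The only point that needs a little care — and this is the main (mild) obstacle — is verifying that the bifiltered complex $(\bar{\Omega}_{X/S}^{\bullet}, W, F)$ really is strict for the pair of filtrations $(W,F)$, i.e. that passing to $\text{Gr}^W$ and to $\text{Gr}_F$ commutes in the derived category of sheaves on $S$ after applying $Rf_*$. In the absolute case this is built into the statement that one has a cohomological mixed Hodge complex, and the analogous statement in the relative case is the content of the relative Theorem above (variation of mixed Hodge structures). So I would simply invoke that theorem: the bistrictness that underlies the $E_1$-degeneration of the Hodge filtration on a mixed Hodge complex (\cite{Deligne3}~8.1.9, applied fibrewise and then globalized by the variation statement) is exactly what makes the ${}_Wd_1$ differentials split along $r$. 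Once that is in hand, the deduction is purely formal and identical to the proof of Corollary~\ref{degeneration10}, so I would keep the written proof to one or two sentences pointing to Corollary~\ref{degeneration2} and the argument of Corollary~\ref{degeneration10}.
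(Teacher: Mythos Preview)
Your proposal is correct and follows essentially the same approach as the paper: the paper gives no separate proof for this corollary, signalling with the phrase ``we have again'' that the argument of Corollary~\ref{degeneration10} carries over verbatim to the relative setting, using the two degenerations of Corollary~\ref{degeneration2} in place of those of Corollary~\ref{degeneration1}. Your discussion of bistrictness is more explicit than anything the paper writes, but when $B=0$ the weight filtration is just the Mayer--Vietoris (stupid) filtration on the complex of strata, so the compatibility of $W$ and $F$ is immediate and the paper's two-sentence proof suffices.
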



\section{Canonical extension}

We prove local freeness theorem by using the theory 
of canonical extensions when the degeneration locus is a simple 
normal crossing divisor.

Let $(S, B_S)$ be a pair of a smooth projective variety and a 
simple normal crossing divisor.
We denote $S^o = S \setminus B_S$.
Let $H_{\mathbf{Q}}$ be a local system on $S^o$ which underlies a 
variation of mixed Hodge $\mathbf{Q}$-structures.
Assuming that all the local monodromies around the branches of $B_S$ are
quasi-unipotent, we define
the {\em lower canonical extension} $\tilde{\mathcal{H}}$ 
of $\mathcal{H} = H_{\mathbf{Q}} \otimes \mathcal{O}_{S^o}$ as follows.

We take an arbitrary point $s \in B_S$ at the boundary.
Let $N_i=\log T_i$ be the logarithm of the local monodromies $T_i$ around 
the branches of $B_S$ around $s$,
and let $t_i$ be the local coordinates corresponding to the branches.
Here we select a branch of the logarithmic function so that the 
eigenvalues of $N_i$ belong to the interval $2\pi \sqrt{-1}(-1,0]$.
Then the lower canonical extension $\tilde{\mathcal{H}}$ is 
defined as a locally free sheaf on $S$ 
which is generated by local sections of the form 
$\tilde v= \text{exp}(-\sum_i N_i\log t_i/2\pi \sqrt{-1})(v)$ near $s$, 
where the $v$ are flat sections of $H_{\mathbf{Q}}$.
We note that the monodromy actions on $v$ and the logarithmic functions 
are canceled and the $\tilde v$
are single-valued holomorphic sections of $\mathcal{H}$ outside the 
boundary divisors.

By \cite{Schmid}, the Hodge filtration of $\mathcal{H}$ extends
to a filtration by locally free subsheaves, which we denote again by $F$. 

Let $f: X \to S$ be a surjective morphism between smooth projective 
varieties which is smooth over $S^o$.
We denote $X^o=f^{-1}(S^o)$ and $f^o = f \vert_{X^o}$.
Let $H_{\mathbf{Q}}^q = R^qf^o_*\mathbf{Q}_{X^o}$ for an integer $q$, 
let $\mathcal{H}^q = H_{\mathbf{Q}}^q \otimes \mathcal{O}_{S^o}$, and 
let $\tilde{\mathcal{H}}^q$ be the canonical extension.
Then by Koll\'ar \cite{Kollar2} and Nakayama \cite{Nakayama},
there is an isomorphism 
\[
R^qf_*\mathcal{O}_X \cong \text{Gr}_F^0(\tilde{\mathcal{H}}^q)
\]
which extends the natural isomorphism 
\[
R^qf^o_*\mathcal{O}_{X^o} \cong \text{Gr}_F^0(\mathcal{H}^q).
\]

The following theorem will be used in the next section:

\begin{Thm}\label{canonical extension}
Let $X$ be a projective GSNC variety, 
$(S,B_S)$ a pair of a smooth projective variety and a simple normal 
crossing divisor, and let $f: X \to S$ be a projective surjective morphism.
Assume that, for each closed stratum $Y$ of $X$, 
the restriction $f \vert_Y: Y \to S$
is surjective and smooth over $S^o = S \setminus B_S$.
Denote $X^o=f^{-1}(S^o)$ and $f^o = f \vert_{X^o}$.
For integers $q$, let $H_{\mathbf{Q}}^q = R^qf^o_*\mathbf{Q}_{X^o}$ 
be the local system on $S^o$ 
which underlies a variation of mixed Hodge $\mathbf{Q}$-structures
defined in the preceeding section.
Let $\mathcal{H}^q = H_{\mathbf{Q}}^q \otimes \mathcal{O}_{S^o}$, and 
let $\tilde{\mathcal{H}}^q$ be their canonical extensions.
Then the following hold:

(1) The weight spectral sequence of a Hodge piece
\[
{}_{W,0}E_1^{p,q} = R^{p+q}f_*\text{Gr}_{-p}^W(\mathcal{O}_X)
\Rightarrow R^{p+q}f_*\mathcal{O}_X
\]
degenerates at $E_2$.

(2) There are isomorphisms 
\[
R^qf_*\mathcal{O}_X \cong \text{Gr}_F^0(\tilde{\mathcal{H}}^q)
\]
for all integers $q$ which extend the natural isomorphisms 
\[
R^qf^o_*\mathcal{O}_{X^o} \cong \text{Gr}_F^0(\mathcal{H}^q)
\]
in Corollary~\ref{degeneration2}.

(3) $R^qf_*\mathcal{O}_X$ are locally free sheaves.
\end{Thm}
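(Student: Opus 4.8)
The plan is to reduce all three statements to the smooth‐variety case of Koll\'ar \cite{Kollar2} and Nakayama \cite{Nakayama}, applied to the individual closed strata of $X$, combined with the relative Hodge theory of \S 3 over $S^o$ and with the local freeness of the graded Hodge bundles of a canonical extension coming from Schmid's theory. For part (1) I would first make the weight spectral sequence explicit. Taking the Hodge index $r=0$ in Lemma~\ref{resolution}\,--\,wait, rather in Lemma~\ref{Gr1} (with $B$ replaced by $0$, so that $\bar{\Omega}_X^0\cong\mathcal{O}_X$) one gets canonical identifications
\[
\text{Gr}_{-p}^W(\mathcal{O}_X)\cong\mathcal{O}_{X^{[p]}}[-p],
\]
the $d_1$-differentials being induced by the alternating sums of the restriction maps $\mathcal{O}_{X^{[p]}}\to\mathcal{O}_{X^{[p+1]}}$; hence ${}_{W,0}E_1^{p,q}\cong R^q(f\vert_{X^{[p]}})_*\mathcal{O}_{X^{[p]}}$. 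Each irreducible component of $X^{[p]}$ is a smooth projective variety whose structure morphism to $S$ is projective and smooth over $S^o$, so by \cite{Kollar2} and \cite{Nakayama} these $E_1$-terms are locally free, isomorphic to $\text{Gr}_F^0$ of the lower canonical extensions of the polarized variations of Hodge structure $R^q(f\vert_{X^{[p]}})^o_*\mathbf{Q}$ on $S^o$, and the $d_1$-differentials are the canonical extensions of morphisms of those variations. Over $S^o$ itself, \S 3 applies to the restricted situation, giving degeneration of the weight spectral sequence at $E_2$ and of the Hodge spectral sequence at $E_1$ (Corollary~\ref{degeneration2}); as in the proof of Corollary~\ref{degeneration10} this forces the weight spectral sequence of the piece $r=0$ to degenerate at $E_2$ over $S^o$.

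To deduce (1) over all of $S$, I would use that morphisms of polarized variations of Hodge structure are strict for $F$ and that the lower canonical extension is an exact functor compatible with $W$, with $F$, and with $\text{Gr}$. Consequently the sheaves $\mathcal{E}_2^{p,q}=\mathcal{H}^p({}_{W,0}E_1^{\bullet,q})$ on $S$ are again locally free\,--\,each is $\text{Gr}_F^0$ of the canonical extension of a polarized variation of Hodge structure on $S^o$, namely of $\text{Gr}_F^0\,\text{Gr}_{-p}^W$ of $R^{p+q}(f^o)_*\mathbf{Q}_{X^o}$\,--\,and they restrict to the $E_2$-terms over $S^o$. The higher differentials $d_r$, $r\ge 2$, vanish over $S^o$ by the degeneration there; a homomorphism between torsion‑free sheaves that vanishes on a dense open subset is zero, so $d_r=0$ on all of $S$. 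This proves (1). Running the same argument page by page shows in addition that $\text{Gr}_{-p}^W R^nf_*\mathcal{O}_X\cong\mathcal{E}_2^{p,n-p}$ is locally free for all $n,p$; since an extension of a locally free sheaf by a locally free sheaf is locally free, this already gives that $R^nf_*\mathcal{O}_X$ is locally free, i.e.\ (3) (which also follows from (2) together with Schmid's local freeness of $\text{Gr}_F^0(\tilde{\mathcal{H}}^q)$).

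For (2), over $S^o$ the $E_1$-degeneration of the Hodge spectral sequence (Corollary~\ref{degeneration2}) identifies $R^qf^o_*\mathcal{O}_{X^o}$ with $\text{Gr}_F^0(\mathcal{H}^q)$ compatibly with the weight filtration, and on $\text{Gr}^W$ this is the Koll\'ar--Nakayama isomorphism for the strata. On $S$, part (1) supplies the weight filtration on $R^qf_*\mathcal{O}_X$ by subbundles, while $\tilde{\mathcal{H}}^q$ carries the extended filtrations $W$ and $F$ with $\text{Gr}^W$ the canonical extensions of the $\text{Gr}^W$ of $\mathcal{H}^q$; one then constructs the comparison map $R^qf_*\mathcal{O}_X\to\text{Gr}_F^0(\tilde{\mathcal{H}}^q)$ respecting these filtrations, checks that on $\text{Gr}^W$ it reduces to the Koll\'ar--Nakayama isomorphism on each stratum, and concludes that it is an isomorphism; the relevant extension data match because they agree over the dense open subset $S^o$ and are determined there, the ambient sheaves being torsion‑free. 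This yields (2), and with it (3).

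The main obstacle is precisely the passage from $S^o$ to $S$ in part (1): the relative Hodge theory of \S 3 only controls the smooth locus, so the $E_2$-degeneration of the weight spectral sequence must be transported across the degenerate fibers over $B_S$. The device that makes this possible is that, after applying Koll\'ar--Nakayama stratum by stratum, the whole $E_1$-complex over $S$ becomes the canonical extension of a complex of polarized variations of Hodge structure on $S^o$, so exactness of the canonical extension keeps the higher pages locally free and a torsion‑free sheaf morphism that dies on a dense open dies everywhere. The points needing the most care are that the $d_1$-differentials are genuinely (Tate twists of) morphisms of variations of Hodge structure, hence strict for $F$, and that the lower canonical extension is compatible with $\text{Gr}^W$, with $\text{Gr}_F$, and with short exact sequences in the pertinent admissible category.
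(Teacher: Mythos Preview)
Your proposal is correct and follows essentially the same approach as the paper: define the weight filtration on $\mathcal{O}_X$ via the Mayer--Vietoris resolution, identify the $E_1$-page stratum-wise with the Koll\'ar--Nakayama canonical extensions, and transport the $E_2$-degeneration from $S^o$ to $S$ through the canonical extension (the paper cites \cite{reducible}~Theorem~5.1 for this step, while you spell out the mechanism---locally free $E_2$-terms via strictness plus exactness of the lower canonical extension, and the vanishing of torsion-free morphisms that die on a dense open). The only cosmetic point is that the vanishing of the $d_r$ for $r\ge 2$ should be phrased inductively page by page (you need $E_r$ torsion-free to kill $d_r$, which then gives $E_{r+1}=E_r$), as you indeed indicate in the next sentence.
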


\begin{proof}
We extend the definition of the weight filtration on 
$\mathcal{O}_{X^o} = \text{Gr}_F^0(\bar{\Omega}_{X^o/S^o}^{\bullet})$ 
to $\mathcal{O}_X$ by an exact sequence
\[
\begin{split}
&0 \to W_q(\mathcal{O}_X) \to W_q(\mathcal{O}_{X^{[0]}}) 
\to W_{q+1}(\mathcal{O}_{X^{[1]}}) \to \\
&\dots \to W_{q+N}(\mathcal{O}_{X^{[N]}}) \to 0
\end{split}
\]
where $W_q(\mathcal{O}_{X^{[p]}})=\mathcal{O}_{X^{[p]}}$ for 
$q \ge 0$, and $=0$ otherwise, for any $p$.
By the canonical extension, we extend the $E_2$-degeneration of the weight 
spectral sequence from $S^o$ to $S$ as in \cite{reducible}~Theorem~5.1.
Then we reduce the assertion to the theorem of Koll\'ar and Nakayama.
\end{proof}


\section{Koll\'ar type vanishing}

We shall generalize the vanishing theorem of Koll\'ar \cite{Kollar1} to 
GSNC varieties:

\begin{Thm}\label{injective}
Let $X$ be a projective GSNC variety,
$S$ a normal projective variety, 
$f: X \to S$ a projective surjective morphism, 
and let $L$ be a permissible Cartier divisor on $X$
such that $\mathcal{O}_X(m_1L)$ is generated by global sections
for a positive integer $m_1$.
Assume that $\mathcal{O}_X(m_2L) \cong f^*\mathcal{O}_Z(L_S)$ 
for a positive integer $m_2$ and an ample Cartier divisor $L_S$ on $S$, 
and for each closed stratum $Y$ of $X$, 
the restricted morphism $f \vert_Y: Y \to S$ is surjective.
Then the following hold:

(1) Let $s \in H^0(X,\mathcal{O}_X(nL))$ be a non-zero section for 
some positive integer $n$ such that the corresponding Cartier divisor $D$ is 
permissible.
Then the natural homomorphisms given by $s$
\[
H^p(X, \mathcal{O}_X(K_X+L)) \to H^p(X, \mathcal{O}_X(K_X+L+D)) 
\]
are injective for all $p$.

(2) $H^q(S, R^pf_*\mathcal{O}_X(K_X+L))=0$ for $p \ge 0$ and $q > 0$.

(3) $R^pf_*\mathcal{O}_X(K_X)$ are torsion free for all $p$.
\end{Thm}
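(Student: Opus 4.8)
The plan is to follow the original argument of Koll\'ar \cite{Kollar1}, adapted to the GSNC setting by systematically reducing each statement to the corresponding statement on the closed strata via the Mayer--Vietoris structure established in \S 2--4. The three assertions are proved in order, with (1) implying (2) and (2) essentially implying (3).

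\textbf{Step 1: the injectivity statement (1).} First I would pass to a cyclic covering construction. Using Lemma~\ref{covering} (the covering lemma), I would produce a finite morphism $\pi\colon (X',B')\to(X,B)$ from another GSNC pair so that the section $s\in H^0(X,\mathcal O_X(nL))$ pulls back to an $n$-th power, or more precisely so that the relevant fractional divisor $\frac1n D$ becomes integral. Over $X'$ one then has a logarithmic structure: writing $\bar B' = B'\cup\pi^{-1}(D)$ and invoking Lemma~\ref{resolution} if necessary to make this a GSNC divisor, the key point is that $\mathcal O_X(K_X+L)$ becomes a direct summand of $\pi_*$ of a sheaf of the form $\bar\Omega_{X'}^N(\log\bar B')\otimes(\text{torsion line bundle})$, i.e. the top piece of the De Rham complex $\bar\Omega_{X'}^{\bullet}(\log\bar B')$ twisted by a unitary local system. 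The injectivity of multiplication by $s$ then follows from the $E_1$-degeneration of the Hodge spectral sequence (Corollary~\ref{degeneration1}) applied on $X'$: degeneration forces the maps $H^p(\bar\Omega^N)\to H^p$ of the hypercohomology of the open part to be injective, and one identifies the map induced by $s$ with a piece of the map $Ri_*\mathbf C_{X'\setminus\bar B'}\to Ri_*\mathbf C_{X'\setminus(\bar B'\setminus B')}$ on the closed strata level, where it is a split injection by the Mayer--Vietoris decomposition of Proposition~2.1 together with the classical statement for smooth varieties. Taking $\pi$-invariants gives the claim on $X$.

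\textbf{Step 2: the global vanishing (2).} Given (1), I would run the standard Koll\'ar argument: choose $n$ and a section $s$ of $\mathcal O_X(nL)$ coming (after replacing $L_S$ by a multiple) from a general member of a very ample linear system on $S$, so that $D=f^*H$ is permissible and $f|_Y$ remains surjective onto $S$ for every closed stratum $Y$; this uses the hypothesis that $\mathcal O_X(m_1L)$ is globally generated and $\mathcal O_X(m_2L)\cong f^*\mathcal O_S(L_S)$ with $L_S$ ample. Pushing forward and using the Leray spectral sequence together with the injectivity from part (1) and the ampleness of $L_S$ on $S$, one gets that $H^q(S,R^pf_*\mathcal O_X(K_X+L))$ injects into the same group twisted by $H$, and iterating plus Serre vanishing forces it to vanish for $q>0$. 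The only subtlety here is ensuring the needed Bertini-type genericity of $D$ in the GSNC category, which is handled exactly by condition (b) of Lemma~\ref{resolution}: a general divisor pulled back from $S$ does not contain any stratum and meets each stratum properly.

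\textbf{Step 3: torsion-freeness (3).} For the last assertion I would reduce to (2) by the usual device: $R^pf_*\mathcal O_X(K_X)$ is torsion free if and only if, for every sufficiently ample line bundle $A$ on $S$ and every member of a suitable linear system, the restriction maps behave correctly; concretely, torsion would be supported on a divisor $T\subset S$, and choosing $L$ an appropriate pullback plus applying (2) to bound $H^1$ shows the torsion subsheaf has no sections after twisting, hence is zero. Equivalently one argues locally analytically on $S$ and uses that $f|_Y$ is surjective for each stratum so that the Mayer--Vietoris presentation of $Rf_*\mathcal O_X(K_X)$ has torsion-free terms $R^pf_{Y*}\mathcal O_Y(K_Y)$ by Koll\'ar's original torsion-freeness on the smooth strata, and torsion-freeness is preserved under the kernels/cokernels appearing because the weight spectral sequence degenerates at $E_2$ (Corollary~\ref{degeneration2}).

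\textbf{Main obstacle.} I expect the delicate point to be Step~1: making the covering-and-logarithmic-differential construction actually identify $\mathcal O_X(K_X+L)$ (and its twist by $D$) as a genuine \emph{direct summand}, compatibly with the Mayer--Vietoris filtration, so that the Hodge-theoretic degeneration on $X'$ can be transferred back to the stratum-wise injectivity on $X$. In the reducible (SNC) case this is \cite{reducible}, but here the local model is a product of normal crossing divisors, so one must check that the round-up $\ulcorner \frac1n D\urcorner$ and the canonical bundle $K_X = \mathcal O_X(K_X)$ (which makes sense since $X$ is Gorenstein) restrict correctly to every closed stratum — this is precisely the compatibility $\ulcorner D\urcorner|_Y = \ulcorner D|_Y\urcorner$ and $B|_Y$ being SNC that was built into the definition of a GSNC pair, so the verification is routine but must be done stratum by stratum. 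Everything after that is a faithful transcription of Koll\'ar's proof.
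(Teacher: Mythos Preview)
Your outline diverges from the paper's proof in two places, and in both the sketched argument does not close.

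For (1), you propose to realize $\mathcal{O}_X(K_X+L)$ as a direct summand of the push-forward of the top piece $\bar\Omega_{X'}^N(\log\bar B')$ on a cover, and then invoke $E_1$-degeneration. But for a GSNC variety the top Du Bois piece is $\bar\Omega_X^N \cong \Omega_{X^{[0]}}^N = \bigoplus_i \omega_{X_i}$ (since $\Omega_{X^{[p]}}^N = 0$ for $p \ge 1$), which is \emph{not} the dualizing sheaf $\omega_X$: on each irreducible component one has $\omega_X\vert_{X_i} \cong \omega_{X_i}(\text{double locus})$. So Corollary~\ref{degeneration1} does not speak directly to $H^p(K_X+L)$. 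The paper avoids this by Serre-dualizing to the surjectivity of $H^p(\mathcal{O}_X(-D-D')) \to H^p(\mathcal{O}_X(-D'))$ for two generic members $D,D' \in \vert L \vert$, and proves that via the mixed Hodge structure on $H^*(\mathcal{O})$ of the GSNC varieties $D,D',D\cap D',D+D'$: a path in $\vert L \vert$ gives a diffeomorphism of pairs $(D,D\cap D')\cong(D',D\cap D')$, and an induction on the weight filtration (using the explicit description of $\text{Gr}^W$ from Lemma~\ref{Gr1}) shows that a certain intersection of images vanishes.

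For (3) neither of your reductions works. Torsion in $R^pf_*\omega_X$ can be supported in arbitrary codimension, so vanishing of $H^q$ after an ample twist does not kill it; and cohomology of a complex of torsion-free sheaves need not be torsion-free, so $E_2$-degeneration of the weight spectral sequence on the strata gives nothing. In fact (3) is the most delicate part of the proof: one takes a pencil spanned by a special $D$ (vanishing along a chosen fiber of $f$) and a generic $D'$, blows up the base locus $D\cap D'$ to get $g\colon\tilde X\to\mathbf{P}^1$, and then uses semisimplicity of polarized VHS together with the invariant cycle theorem on each stratum of $\tilde X$ to control the restriction maps. The equality $h^p(\mathcal{O}_D)=h^p(\mathcal{O}_{D'})$ needed at the end comes from local freeness of $R^pg_*\mathcal{O}_{\tilde X}$, which is exactly Theorem~\ref{canonical extension} --- a result you do not invoke but which the entire canonical-extension section was set up to supply. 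Note also that the logical order is not $(1)\Rightarrow(2)\Rightarrow(3)$: the paper first proves (1) only for generic $D$, then (2), then (3), and only afterwards completes (1) for arbitrary $D$.
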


\begin{proof}
We follow closely the proof of \cite{Kollar1}~Theorem~2.1 and 2.2.
We use the same numbering of the steps.

{\em Step 1}.
We may assume, and will assume from now, 
that $\mathcal{O}_X(L)$ is generated by global sections.

We achieve this reduction by constructing a Kummer covering and  
taking the Galois invariant part as in loc. cit.

\vskip 1pc

{\em Step 2}.
We prove the dual statement of (1) in the case where $n=1$ and $D$ 
is generic:
Let $s,s' \in H^0(X,L)$ be general members, 
and let $D,D'$ be the corresponding permissible Cartier divisors.
Then the natural homomorphisms given by $s$
\[
H^p(X, \mathcal{O}_X(-D-D')) \to H^p(X, \mathcal{O}_X(-D')) 
\]
are surjective for all $p$.

We go into details in this step in order to show how to 
generalize the argument in loc. cit. to the GSNC case.
Since $s$ and $s'$ are general, $D,D',D+D'$ and $D \cap D'$ are also 
GSNC varieties.
We consider the following commutative diagram:
\[
\begin{CD}
H^{p-1}(\mathcal{O}_{D+D'}) @>>> 
H^p(\mathcal{O}_X(-D-D')) @>>> 
H^p(\mathcal{O}_X) @>d_p>> H^p(\mathcal{O}_{D+D'}) \\
@V{b'_{p-1}}VV @VVV @V=VV @V{b'_p}VV \\
H^{p-1}(\mathcal{O}_{D'}) @>>> 
H^p(\mathcal{O}_X(-D')) @>>> 
H^p(\mathcal{O}_X) @>{e'_p}>> 
H^p(\mathcal{O}_{D'})
\end{CD}
\] 
In order to prove our assertion, we shall prove that 
(a) $b'_{p-1}$ is surjective, and (b) $\text{Ker}(d_p) = \text{Ker}(e'_p)$.

(a) We consider the following Mayer-Vietoris exact sequence
\[
\begin{CD}
H^{p-1}(\mathbf{C}_{D \cap D'}) @>{\bar a_p}>> H^p(\mathbf{C}_{D+D'})
@>{\bar b_p+\bar b'_p}>> H^p(\mathbf{C}_D) \oplus H^p(\mathbf{C}_{D'}) 
@>{\bar c_p-\bar c'_p}>> H^p(\mathbf{C}_{D \cap D'}) 
\end{CD}
\]
whose $\text{Gr}^0_F$ is 
\[
\begin{CD}
H^{p-1}(\mathcal{O}_{D \cap D'}) @>{a_p}>> H^p(\mathcal{O}_{D+D'})
@>{b_p+b'_p}>> H^p(\mathcal{O}_D) \oplus H^p(\mathcal{O}_{D'})
@>{c_p-c'_p}>> H^p(\mathcal{O}_{D \cap D'}).
\end{CD}
\]
There is a path connecting $D$ and $D'$ in the linear system $\vert L \vert$ 
on $X$ which gives a diffeomorphism of pairs $(D,D \cap D') \to 
(D',D \cap D')$ fixing $D \cap D'$.
Therefore we have an equality
$\text{Im}(\bar c_p)=\text{Im}(\bar c'_p)$, 
which implies that $\bar b'_p$ is surjective.
Hence so is $b'_p$.

(b) It is sufficient to prove that $\text{Im}(d_p) \cap \text{Ker}(b'_p) = 0$.
Using a path connecting $D$ and $D'$, we deduce that 
$\text{Ker}(e_p)=\text{Ker}(e'_p)$ for 
$e_p: H^p(\mathcal{O}_X) \to H^p(\mathcal{O}_D)$.
Thus 
\[
\text{Im}(d_p) \cap \text{Ker}(b'_p) = \text{Im}(d_p) \cap \text{Ker}(b_p).
\]
Therefore it is sufficient to prove that 
$\text{Im}(a_p) \cap \text{Im}(d_p) = 0$.
We shall prove that 
\[
\text{Im}(a_p) \cap \text{Im}(d_p) \cap W_q(H^p(\mathcal{O}_{D+D'}))= 0
\]
by induction on $q$.

Let 
\[
\begin{split}
&a_{p,q}: \text{Gr}^W_q(H^{p-1}(\mathcal{O}_{D \cap D'})) \to 
\text{Gr}^W_q(H^p(\mathcal{O}_{D+D'})) \\
&d_{p,q}: \text{Gr}^W_q(H^p(\mathcal{O}_X)) \to 
\text{Gr}^W_q(H^p(\mathcal{O}_{D+D'})).
\end{split}
\]
be the natural homomorphisms.
Then it is sufficient to prove that 
\[
\text{Im}(a_{p,q}) \cap \text{Im}(d_{p,q}) = 0.
\]
For $A=X,D\cap D'$ or $D+D'$, we have the following spectral sequences
\[
{}_{W,A}E_1^{r,s} = H^{r+s}(\text{Gr}_{-r}^W(\mathcal{O}_A))
=\bigoplus_{\dim A - \dim Y = r} H^s(\mathcal{O}_Y)
\Rightarrow H^{r+s}(\mathcal{O}_A). 
\]
Therefore the boundary homomorphism $d_{p,q}$ is induced from the sum of homomorphisms 
$H^s(\mathcal{O}_Y) \to H^s(\mathcal{O}_{Y'})$
such that $s = q$, $Y \subset X$, $Y' \subset D+D'$, $Y' \subset Y$ and
\[
r = \dim X - \dim Y = \dim (D+D') - \dim Y' = p - q
\]
hence $Y' = Y \cap D$ or $Y' = Y \cap D'$.

On the other hand, $a_{p,q}$ is induced from the sum of homomorphisms 
$H^s(\mathcal{O}_{Y''}) \to H^s(\mathcal{O}_{Y'})$
such that $s = q$, $Y'' \subset D \cap D'$, $Y' \subset D+D'$, $Y'' = Y'$ and
\[
r = \dim (D \cap D') - \dim Y'' + 1 = \dim (D+D') - \dim Y' = p - q.
\]
Therefore there is no closed stratum $Y'$ of $D+D'$ which 
receives non-trivial images from both $Y$ and $Y''$, 
hence we have our assertion.

\vskip 1pc

{\em Step 3}.
(1) in the case where $n=2^d-1$ for a positive integer $d$ and $D$ generic
is an immediate corollary of Step 2.

{\em Step 4}.
Proof of (2) is the same as in loc. cit.

\vskip 1pc

{\em Step 5}.
Proof of (3).

This is a generalization of Step 2.
We use the notation there.
$D'$ is again generic but $D$ is special.
More precisely, $D$ is special along a fiber $f^{-1}(s)$ over a point $s \in S$
and generic otherwise.
Therefore the intersection $D \cap D'$ is still generic, hence a 
GSNC variety.

Let $\mu: \tilde X \to X$ be the blowing-up along the center $D \cap D'$.
$\tilde X$ is again a GSNC variety.
We denote by $\tilde Y$ the closed stratum of $\tilde X$ 
above a closed closed stratum $Y$ of $X$.
Let $g: \tilde X \to \mathbf{P}^1$ be the morphism induced from the 
linear system spanned by $D$ and $D'$. 
Let $U \subset \mathbf{P}^1$ be an open dense subset such that the restricted
morphisms $g \vert_{\tilde Y}: \tilde Y \to \mathbf{P}^1$ are smooth over $U$
for all the closed strata $Y$.

(a) In order to prove that $b'_p$ is surjective, 
we need to prove an inclusion 
$\text{Im}(c'_p) \subset \text{Im}(c_p)$.
For this purpose, we shall prove 
\[
\text{Im}(c'_p) = \text{Im}(c_p \circ e_p).
\]
Since $c_p \circ e_p = c'_p \circ e'_p$, the right hand side is contained in
the left hand side.
The other direction is the essential part.
We note that both $c'_p$ and $c_p \circ e_p$ are parts of homomorphisms 
of mixed Hodge structures.

Let $\tilde X_U = g^{-1}(U)$ and $\tilde Y_U = \tilde Y \cap \tilde X_U$.
Let $H^p_{\text{prim}}(\mathbf{C}_{\tilde Y \cap D'}) \subset 
H^p(\mathbf{C}_{\tilde Y \cap D'})$ be the subspace of the 
primitive cohomologies, and $R^pg_{*,\text{prim}}\mathbf{C}_{\tilde Y_U}
\subset R^pg_*\mathbf{C}_{\tilde Y_U}$ the corresponding local subsystem. 
Then natural homomorphisms 
\[
\begin{split}
&R^pg_*\mathbf{C}_{\tilde X_U} \to H^p(\mathbf{C}_{D \cap D'}) \times U \\
&R^pg_{*,\text{prim}}\mathbf{C}_{\tilde Y_U} \to 
H^p(\mathbf{C}_{Y \cap D \cap D'}) \times U
\end{split}
\] 
underlie respectively morphisms of variations of mixed and pure Hodge 
structures over $U$, where the targets are constant variations.

By the semi-simplicity of the category of variations of pure Hodge structures 
of fixed weight (\cite{Deligne2}), 
we deduce that the local system 
$R^pg_{*,\text{prim}}\mathbf{C}_{\tilde Y_U}$ has a 
subsystem which is isomorphic to a constant local system with fiber
$\text{Im}(\bar c'_{p,Y})$ for 
$\bar c'_{p,Y}: H^p_{\text{prim}}(\mathbf{C}_{Y \cap D'}) \to 
H^p(\mathbf{C}_{Y \cap D \cap D'})$.
Then by the invariant cycle theorem (\cite{Deligne2}), we deduce
\[
\begin{split}
&\text{Im}(\bar c'_{p,Y})
\subset \text{Im}(H^0(U,R^pg_{*,\text{prim}}\mathbf{C}_{\tilde Y_U}) \to 
H^p(\mathbf{C}_{Y \cap D \cap D'})) \\
&\subset \text{Im}(H^p(\mathbf{C}_{\tilde Y}) \to 
H^p(\mathbf{C}_{Y \cap D \cap D'}))
\end{split}
\]
where the second and third homomorphisms are derived from the 
restrictions to the fiber $D'$ of $g$.
Since
\[
H^p(\mathcal{O}_{Y \cap D'}) =  
\text{Gr}^0_F(H^p_{\text{prim}}(\mathbf{C}_{Y \cap D'}))
\]
we conclude
\[
\text{Im}(c'_{p,Y}: H^p(\mathcal{O}_{Y \cap D'}) \to 
H^p(\mathcal{O}_{Y \cap D \cap D'}))
\subset \text{Im}(H^p(\mathcal{O}_{\tilde Y}) \to 
H^p(\mathcal{O}_{Y \cap D \cap D'})).
\]
Since the combinatorics of the closed strata are the same for $\tilde X$ and 
$D \cap D'$, we have
\[
\text{Im}(c'_p: H^p(\mathcal{O}_{D'}) \to 
H^p(\mathcal{O}_{D \cap D'}))
\subset \text{Im}(H^p(\mathcal{O}_{\tilde X}) \to H^p(\mathcal{O}_{D \cap D'})).\]
Since $H^p(\mathcal{O}_{\tilde X}) \cong H^p(\mathcal{O}_X)$, we have 
our assertion.

(b) There is an obvious inclusion $\text{Ker}(d_p) \subset \text{Ker}(e'_p)$.
We know already that 
\[
H^p(\mathcal{O}_X(-D-D')) \to H^p(\mathcal{O}_X(-D))
\]
is surjective, because it is a statement which is independent of $D$.
Thus we have $\text{Ker}(d_p) = \text{Ker}(e_p)$.
Therefore it is sufficient to prove 
\[
\text{rank}(e_p) = \text{rank}(e'_p).
\]

As explained in the next Step 6, 
the sheaves $R^pg_*\mathcal{O}_{\tilde X}$ are locally free
for all $p$.
Therefore we have $h^p(\mathcal{O}_D) = h^p(\mathcal{O}_{D'})$, where 
we denote $h^p=\dim H^p$.
We compare two exact sequences 
\[
\begin{split}
&0 \to \mathcal{O}_X(-D) \to \mathcal{O}_X \to \mathcal{O}_D \to 0 \\
&0 \to \mathcal{O}_X(-D') \to \mathcal{O}_X \to \mathcal{O}_{D'} \to 0. 
\end{split}
\]
Since the corresponding $h^p$'s are equal, we deduce that the ranks of the 
corresponding homomorphisms of the long exact sequences are equal,
and we are done.

\vskip 1pc

{\em Step 6}. 
This is our Theorem~\ref{canonical extension}.

{\em Step 7}. 
Proof of (1) is the same as loc. cit.
\end{proof}

Finally we can easily generalize the vanishing theorem to the case of 
$\mathbf{Q}$-divisors by using the covering lemma (Lemma~\ref{covering}):

\begin{Thm}\label{injective2}
Let $X$ be a projective GSNC variety,
$S$ a normal projective variety, 
$f: X \to S$ a projective surjective morphism, 
and let $L$ be a permissible $\mathbf{Q}$-Cartier divisor on $X$
such that $m_1L$ is a Cartier divisor and 
$\mathcal{O}_X(m_1L)$ is generated by global sections
for a positive integer $m_1$.
Assume that the support of $L$ is a GSNC divisor, 
$\mathcal{O}_X(m_2L) \cong f^*\mathcal{O}_Z(L_S)$ 
for a positive integer $m_2$ and an ample Cartier divisor $L_S$ on $S$, 
and for each closed stratum $Y$ of $X$, 
the restricted morphism $f \vert_Y: Y \to S$ is surjective.
Then the following hold:

(1) Let $s \in H^0(X,\mathcal{O}_X(nL))$ be a non-zero section for 
some positive integer $n$ such that the corresponding Cartier divisor $D$ is 
permissible.
Then the natural homomorphisms given by $s$
\[
H^p(X, \mathcal{O}_X(K_X+\ulcorner L \urcorner)) \to 
H^p(X, \mathcal{O}_X(K_X+\ulcorner L \urcorner+D)) 
\]
are injective for all $p$.

(2) $H^q(S, R^pf_*\mathcal{O}_X(K_X+\ulcorner L \urcorner))=0$ 
for $p \ge 0$ and $q > 0$.
\end{Thm}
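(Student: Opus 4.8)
The plan is to reduce Theorem~\ref{injective2} to Theorem~\ref{injective} by means of the covering lemma (Lemma~\ref{covering}). Write $L = \sum d_j D_j$ where the $D_j$ are the irreducible components of the support of $L$ together with the components of $D$, with rational coefficients $d_j$; enlarging the index set so that $D$ itself is among the $D_j$ costs nothing. The hypothesis that the support of $L$ is a GSNC divisor guarantees that the $D_j$ satisfy conditions (1)--(3) of Lemma~\ref{covering} after possibly refining (no common components, and each $D_j \cap Y$ smooth for every closed stratum $Y$). Hence there is a finite surjective morphism $\pi: (X',B') \to (X,B)$ from a projective GSNC pair with $B' = \pi^{-1}(B)$ such that $\pi^*L$ has integral coefficients; similarly $\pi^*D$ is an honest Cartier divisor and $\pi^*(m_1 L)$, $\pi^*(m_2 L)$ pull back the relevant global generation and ampleness data. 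Setting $S' = S$ and $f' = f \circ \pi: X' \to S$ (or, if one prefers, taking the Stein factorization), the restricted morphisms $f'|_{Y'}: Y' \to S$ are still surjective for every closed stratum $Y'$ of $X'$, since $\pi$ is finite surjective and maps closed strata onto closed strata.

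Next I would apply Theorem~\ref{injective} on $X'$ with the Cartier divisor $L' = \pi^* L$ and the section $\pi^* s \in H^0(X', \mathcal{O}_{X'}(n L'))$, whose zero divisor is $\pi^* D$, permissible by construction. This yields that $H^p(X', \mathcal{O}_{X'}(K_{X'} + L')) \to H^p(X', \mathcal{O}_{X'}(K_{X'} + L' + \pi^* D))$ is injective for all $p$, and the analogous vanishing $H^q(S, R^p f'_* \mathcal{O}_{X'}(K_{X'} + L')) = 0$ for $p \ge 0$, $q > 0$.

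The crucial step, and the one I expect to be the main technical obstacle, is the identification of the direct image sheaves. One wants
\[
\bigl(\pi_* \mathcal{O}_{X'}(K_{X'} + \pi^* L)\bigr)^{G} \cong \mathcal{O}_X(K_X + \ulcorner L \urcorner),
\]
where $G$ is the Galois group of $\pi$ (or, for a non-Galois $\pi$, one takes a further Galois cover and the appropriate isotypic/invariant piece). This is the GSNC analogue of the standard fact for normal varieties that $(\pi_* \mathcal{O}_{X'}(K_{X'} + \lfloor \pi^* L \rfloor))^G = \mathcal{O}_X(K_X + \ulcorner -(-L)\urcorner)$; here one must check it stratum by stratum, using that $X$ is Gorenstein with $K_{X'} = \pi^* K_X + R$ for the ramification divisor $R$, that $\ulcorner L \urcorner|_Y = \ulcorner L|_Y \urcorner$ as recorded earlier, and that the ramification of $\pi$ is supported on the $D_j \subset B$ so that the covering is compatible with the GSNC structure. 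The Gorenstein property and the inductive structure of GSNC divisors (a GSNC divisor on a GSNC variety of level $r$ being a GSNC variety of level $r+1$) let one run this computation by induction on the level, reducing ultimately to the simple normal crossing case treated in \cite{AB}.

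Once this identification is in place, taking $G$-invariants (an exact functor on $\mathbf{Q}$-vector spaces, hence compatible with cohomology and with $Rf_*$) of the injectivity and vanishing statements on $X'$ produces exactly assertions (1) and (2) on $X$, since $\pi_* \mathcal{O}_{X'}(K_{X'} + \pi^*L + \pi^* D)^G \cong \mathcal{O}_X(K_X + \ulcorner L \urcorner + D)$ by the same computation with $D$ added, and $R^p f'_* = R^p f_* \circ \pi_*$ as $\pi$ is finite. The compatibility of the maps ``multiplication by $s$'' before and after the cover is automatic because $\pi^* s$ restricts to $s$ on the invariant part. This completes the modification of the argument of \cite{invent}.
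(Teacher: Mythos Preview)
Your proposal is correct and follows exactly the same route as the paper: construct a finite Galois Kummer cover $\pi:X'\to X$ via Lemma~\ref{covering} so that $\pi^*L$ is Cartier, apply Theorem~\ref{injective} upstairs, and descend via the identification $(\pi_*\mathcal{O}_{X'}(K_{X'}+\pi^*L))^G\cong\mathcal{O}_X(K_X+\ulcorner L\urcorner)$ together with exactness of taking $G$-invariants. The paper's own proof is in fact terser than yours, merely asserting the key direct-image identification without the stratum-by-stratum verification you outline.
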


\begin{proof}
We take a finite Kummer covering $\pi: X' \to X$ from another GSNC variety 
such that $\pi^*L$ becomes a Cartier divisor.
Let $G$ be the Galois group.
Then we have 
\[
(\pi_*\mathcal{O}_X(K_X+L))^G \cong 
\mathcal{O}_X(K_X+\ulcorner L \urcorner).
\]
Therefore our assertion is reduced to the case where $L$ is a Cartier divisor.
\end{proof}

Graduate School of Mathematical Sciences, University of Tokyo,
Komaba, Meguro, Tokyo, 153-8914, Japan 

kawamata@ms.u-tokyo.ac.jp

\end{document}